\newcommand{\R}{\mathbb{R}}
\newcommand{\N}{\mathbb{N}}
\newcommand{\cW}{\mathcal{W}}
\newcommand{\bL}{\mathbb{L}}
\newcommand{\1}{\mathbf{1}}
\renewcommand{\P}{\mathbb{P}}
\newcommand{\tlog}{\mathfrak{log}}
\newcommand{\Var}{\operatorname{Var}}
\newcommand{\diam}{\operatorname{diam}}
\newtheoremstyle{mystyle}{}{}{\rmfamily}%
{}{\normalfont\bfseries}{ }{ }{} 
\newtheorem{theorem}{Theorem}[section]
\newtheorem{proposition}[theorem]{Proposition}
\newtheorem{lemma}[theorem]{Lemma}
\theoremstyle{mystyle}
\newtheorem{remark}[theorem]{Remark}
\title{\rule{\textwidth}{0.02cm}\\[.7cm]
\sf\bf On the law of the iterated logarithm\\ for continued fractions with \\ sequentially restricted partial quotients \\ \bigskip}
\author{{\small \bf Manuel Stadlbauer} \\ {\footnotesize Instituto de Matem\'atica }\\{\footnotesize  Universidade Federal do Rio de Janeiro}\\{\footnotesize  21941-909 Rio de Janeiro (RJ), Brazil}
 \and {\small \bf Xuan Zhang}\\ {\footnotesize Instituto de Matem\'atica e Estat\'istica}\\ {\footnotesize Universidade de S\~ao Paulo}\\ {\footnotesize 05508-090 S\~ao Paulo (SP), Brazil.} }
\date{\small January 23, 2020\\[-0.1cm] \rule{\textwidth}{0.02cm}}
\begin{document}
\maketitle

\begin{abstract} 
We establish a law of the iterated logarithm (LIL) for the set of real numbers whose $n$-th partial quotient is bigger than $\alpha_n$, where $(\alpha_n)$ is a sequence such that $\sum 1/\alpha_n$  is finite. This set is shown to have Hausdorff dimension $1/2$ in many cases and the measure in LIL is absolutely continuous to the Hausdorff measure. The result is obtained as an application of a strong invariance principle for unbounded observables on the limit set of a sequential iterated function system.    
\end{abstract}

\section{Introduction and statement of the main result}
The law of the iterated logarithm  for continued fractions provides an upper bound for the almost sure growth of the partial quotients of $x \in [0,1] \setminus \mathbb Q$. That is, with the  partial quotients  $x_0,x_1, \ldots \in \mathbb N$  given by
\[ x = \frac{1}{x_0 +  \frac{1}{\textstyle x_1 + \frac{1}{\textstyle x_2 + \cdots}}}, \]
the law of the iterated logarithm for $(x_n)$ states that, for Lebesgue almost every $x$, 
\begin{equation} \label{eq:Classical LIL for continued fractions} \limsup_{n\to \infty} \frac{|\sum_{k=0}^{n-1} \log (x_k/a) |}{\sqrt{n\log\log n }} = b,\end{equation}
for some constants $a,b>0$, where $a$ is Khinchin's constant, the geometric mean of $x_k$, and $b$ can be seen as the variance of $\log x_k$. The result is due to Stackelberg (\cite{Stackelberg:1966a}) and was extended shortly thereafter by substituting  $\log (x_k/a)$ with more general functions and $b$ with the associated variance: Reznik for a class of unbounded, continuous  functions in $L^{2+\epsilon}$ (\cite{Reznik:1968a}), Philipp  for functions in $\bigcap_{p>1} L^p$ (\cite{Philipp:1969a}) and Gordin \& Reznik  for the log of the denominator of the $n$-th approximant (\cite{GordinReznik:1970a}). Moreover, a predecessor of \eqref{eq:Classical LIL for continued fractions} was obtained by Doeblin (\cite{Doeblin:1940a}).

The aim of this note is to study subsets $X_\alpha$ of $[0,1]$ with prescribed minimal growth of partial quotients and establish analogous results. 
That is, for $\alpha \in [0,1]\setminus \mathbb{Q}$ with partial quotients  $(\alpha_n)$, set  $X_\alpha := \{x \in [0,1] : x_n \geqslant \alpha_n \ \forall n \in \mathbb N_0\}$.
These sets are of fractal nature and their Hausdorff dimension was studied by several authors. Good (\cite{Good:1941}) showed that the Hausdorff dimension $\dim_{\hbox{\tiny H}} (X)=1/2$, where $X=\{x: x_n\to\infty\}$,
whereas \L{}uczak (\cite{L-uczak:1997}) showed that, if $\alpha_n = c^{b^n}$ with $b, c>1$, then  $\dim_{\hbox{\tiny H}} (X_\alpha) = 1/(b+1)$ (for partial results, see \cite{Hirst:1970,Moorthy:1992a}). Note that these results are similar in spirit but not directly related to results for sets defined by  $x_n \geqslant \alpha_n$ infinitely often as in the Borel-Bernstein theorem or in \cite{WangWu:2008a}. In there, Wang \& Wu obtained an almost complete description of the Hausdorff dimensions for these sets. 

In here, we aim for an almost sure description of the growth of the partial quotients of the elements in $X_\alpha$. In order to do so, we construct a relevant measure, establish the law of the iterated logarithm and finally specify the class of this measure.  We obtain the following analogue of Stackelberg's result. In here, we will write $ a_n \ll b_n $ if there exists $C$ such that $ a_n \leqslant C b_n $.

\begin{theorem}\label{theo:easy_bounds}Assume that $\sum_n 1/\alpha_n < \infty$  
and let $\gamma_n$ be given by $\gamma_n \alpha_n^{\gamma_n}=1$. Then 
there exists a probability measure $\nu$ on $X_\alpha$ for which the following holds almost surely.
 \begin{align*}
\limsup_{n\to \infty} \frac{\sum_{k=0}^{n-1} (\gamma_k \log (x_k/\alpha_k) - 1)}{\sqrt{2 n\log\log n }} = 1.
\end{align*}
Furthermore, if $  n\ll \alpha_n \ll  \lambda^{n}$ or $\lambda^{n} \ll \alpha_n \ll  n^{n(1-\epsilon)}$ for some $\lambda >1$ and $\epsilon >0$, then $\nu$ is absolutely continuous with respect to the $1/2$-dimensional Hausdorff measure $\mu_{1/2}$, $\dim_{\hbox{\tiny H}}(X_\alpha) = 1/2$,  and $\mu_{1/2}(X_\alpha) = \infty$. 
\end{theorem}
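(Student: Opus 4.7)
The plan is to build a non-autonomous Gibbs measure $\nu$ on $X_\alpha$ whose marginal at level $k$ turns $\gamma_k \log(x_k/\alpha_k)-1$ into a centred unit-variance random variable, and then to apply a law of the iterated logarithm for weakly dependent non-stationary arrays.

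First I would introduce, for each $k$, the sequential transfer operator
\[
\mathcal{L}_k f(x) \;=\; \sum_{a \geq \alpha_k} \frac{f(\phi_a(x))}{(a+x)^{\gamma_k + 1}}, \qquad \phi_a(x)=\frac{1}{a+x},
\]
with the potential $-(\gamma_k+1)\log(a+x)$ chosen so that the marginal $\nu(x_k = a)$ is asymptotic to $\gamma_k\, a^{-\gamma_k - 1}$ for $a \geq \alpha_k$; normalisation follows from $\sum_{a \geq \alpha_k} \gamma_k a^{-\gamma_k-1} = 1 + o(1)$, which uses $\gamma_k\alpha_k^{\gamma_k}=1$. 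A cone-contraction argument in the Birkhoff--Hilbert metric, applied uniformly to the family $(\mathcal{L}_k)$, would produce equivariant densities $h_k$ and reference measures $\mu_k$, from which one assembles a probability measure $\nu$ supported on $X_\alpha$ with cylinder asymptotic $\nu([a_0,\dots,a_{n-1}]) \asymp \prod_{k=0}^{n-1}\gamma_k\, a_k^{-\gamma_k-1}$.

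Under this marginal, $Y_k := \gamma_k \log(x_k/\alpha_k)$ has tail $\nu(Y_k \geq t)\sim e^{-t}$ and is thus approximately unit-exponential, so $X_k := Y_k - 1$ has mean $o(1)$ and variance $1+o(1)$. The same spectral-gap input used in the construction of $\nu$ also gives exponential decay of correlations for $(X_k)$, which is enough to invoke a non-stationary almost sure invariance principle (in the spirit of Philipp--Stout or more recent sequential-dynamical ASIPs). Since $\sum_{k<n}\Var_\nu(X_k) = n(1+o(1))$, the normalisation $\sqrt{2n\log\log n}$ is exactly right, and the LIL constant $1$ emerges from the standard Khintchine--Kolmogorov calculation for the limiting Brownian approximation.

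For the Hausdorff comparison, direct computation gives $\mu_{1/2}([a_0,\dots,a_{n-1}]) \asymp \prod_{k<n} a_k^{-1}$, so the candidate Radon--Nikodym derivative $d\nu/d\mu_{1/2}$ is of order $\prod_k \gamma_k a_k^{-\gamma_k}$, which on cylinders is $\leq \prod_k \gamma_k^2$ because $\alpha_k^{-\gamma_k} = \gamma_k$. Under the two growth assumptions on $(\alpha_k)$ the resulting density is $\nu$-integrable, yielding $\nu \ll \mu_{1/2}$; combined with a standard covering upper bound this gives $\dim_{\hbox{\tiny H}}(X_\alpha) = 1/2$, while $\mu_{1/2}(X_\alpha) = \infty$ follows by writing $\mu_{1/2}(X_\alpha) = \int_{X_\alpha} (d\nu/d\mu_{1/2})^{-1}\,d\nu$ and showing that the reciprocal density blows up $\nu$-almost surely at a non-integrable rate (equivalently, by exhibiting a countable family of cylindrical subsets of $X_\alpha$ whose total $\mu_{1/2}$-mass diverges). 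The main obstacle is the uniform spectral gap: since the operators $\mathcal{L}_k$ change at every step, classical perturbation theory does not apply and cone-contraction estimates must be extracted that depend only on $\sum 1/\alpha_n < \infty$. A secondary subtlety is the borderline $\alpha_k \asymp \lambda^k$, where the product $\prod_k \gamma_k a_k^{-\gamma_k}$ is critical and requires a careful pairing with the Hausdorff measure.
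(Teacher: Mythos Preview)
Your plan for the LIL half is broadly the same as the paper's: build a sequential Gibbs measure with potential $\varphi^{(\gamma_k+1)/2}$, extract decay of correlations uniformly in $k$, feed this into a non-stationary ASIP, and read off the LIL with variance $\sim n$. The paper uses Wasserstein contraction of the dual ratio operators (after Hairer--Mattingly) rather than Birkhoff cones, and the Cuny--Merlev\`ede reverse-martingale ASIP rather than Philipp--Stout, but these are legitimate alternatives and your identification of the ``uniform spectral gap'' as the main technical point is correct.

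The Hausdorff-measure part, however, has a genuine gap. Your starting formula $\mu_{1/2}([a_0,\dots,a_{n-1}]) \asymp \prod_{k<n} a_k^{-1}$ is not something you get by ``direct computation'': Hausdorff measure is defined through optimal coverings, not as a product over coordinates, and establishing such a two-sided bound on $X_\alpha$ is essentially equivalent to what you are trying to prove. Consequently the Radon--Nikodym candidate $\prod_k \gamma_k a_k^{-\gamma_k}$ and the integrability argument built on it are not justified. The paper proceeds quite differently: it shows $\lim_{r\to 0}\nu(B_r(x))/r^{1/2}=0$ for $\nu$-a.e.\ $x$ and then invokes a density theorem for Hausdorff measures. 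The key point you are missing is that this density estimate \emph{uses the LIL itself}: the law of large numbers $\tfrac{1}{n}\sum_{k<n}\gamma_k\log(x_k/\alpha_k)\to 1$ a.s.\ (an immediate corollary of the LIL) controls the almost-sure decay of $\prod_{k<n}(\alpha_k^2\varphi(S^k x))^{\gamma_k/2}$, and the growth hypotheses on $(\alpha_n)$ are exactly what is needed so that the residual deterministic factor $\alpha_n^{1/2}\prod_{k<n}\gamma_k$ (together with a Borel--Cantelli term) is beaten by that decay. In your outline the LIL and the Hausdorff comparison are independent modules; in the actual argument the second depends on the first, and the role of the growth windows $n^{1+\epsilon}\ll\alpha_n\ll\lambda^n$ and $\lambda^n\ll\alpha_n\ll n^{n(1-\epsilon)}$ is to verify a single explicit inequality coming out of this balance, not to make a product density integrable.
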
 

Note that $\gamma_n$ is of the order of $\log \log \alpha_n /\log \alpha_n$ (Remark \ref{remark:alpha vs gamma}). We also would like to emphasize that the rescaling of the Birkhoff sum with $(\gamma_n)$ is a consequence of the fractal nature of  $X_\alpha$. Roughly speaking, the $\gamma_n$ are chosen according to  consecutive applications of Moran's formula for the Hausdorff dimension (or Bowen's formula in the language of thermodynamic formalism). That is, the $\gamma_n$ are chosen such that the weights attributed to the intervals of $\{x: x_k \geq \alpha_k, 1 \leq k \leq n \}$ remain summable (see Lemma \ref{Lemma:Estimate_L(1)}). Moreover, the geometric nature of this construction is underlined by the fact that the measure $\nu$  associated to this choice is absolutely continuous with respect to the Hausdorff measure.
%

In fact, we show a stronger statement as in Theorem \ref{theo:easy_bounds}. That is, we prove an abstract strong invariance principle for sequences of locally H\"older continuous functions with finite fourth moment (Theorem \ref{theo:invpp}) and obtain as corollaries a slightly more general law of the iterated logarithm and a law of large numbers, that is  $\sum_{k=0}^{n-1}  \gamma_k \log (x_k/\alpha_k) / n$ converges almost surely to $1$  
(Theorem \ref{prop:Khinchine-Stackelberg}). The summability of $(1/\alpha_n)$  is required  
to control the fluctuations of the variance (Lemma \ref{lemma:Estimate_log}). Moreover, this bound even allows to determine the asymptotic variance, a quantity which in most situations is not accessible explicitly.    
The other bounds are of geometric origin and are required to identify $\nu$ as a geometric measure by showing that $d\nu/d\mu_{1/2}=0$. It is also worth noting that the upper bound is optimal in the sense that $\dim_{\hbox{\tiny H}}(X_\alpha) = 1/(b+1) < 1/2$ for $\alpha_n = c^{b^n}$ and $b, c >1$, as shown by \L{}uczak in \cite{L-uczak:1997}.

From the viewpoint of dimension theory, the novelty in here is the absolute continuity as the result on the Hausdorff dimension is an immediate corollary of \cite{Good:1941,L-uczak:1997} or might be deduced from \cite{Rempe-GillenUrbanski:2016}. It is worth noting that the proof of  absolute continuity requires the above law of large numbers, and that absolute continuity allows to formulate a result independent of $\nu$. Namely, as $d\nu/d\mu_{1/2}=0$,
the laws of the iterated logarithm and large numbers hold on a dense set of infinite $\mu_{1/2}$-measure.
Furthermore, 
due to the flexibility arising from local H\"older continuity, the result also is applicable to the denominators of the approximants $q_n(x)$ given by $p_n(x)/q_n(x) = 1/(x_0 + 1/(x_1 + 1/( \cdots + 1/x_{n-1})\cdots))$ in the second part of  Theorem \ref{prop:Khinchine-Stackelberg}. 

The method of proof might be summarised as follows. As a first step, we consider the sequence $(X_n)$ of shift spaces $X_n := \{(x_j): x_j \geqslant \alpha_{n+j}\ \forall j\geqslant 0\}$ equipped with a sequence of Hölder continuous functions $\varphi_n: X_n  \to (0,\infty)$. As in \cite{BessaStadlbauer:2016}, we define a family of ratio operators $\{\P_n^k\}$ such that $\P_n^k$ maps functions defined on $X_n$ to functions on $X_{n+k}$ with $\P_n^k(\mathbf 1)=\mathbf 1$  and show that its dual uniformly contracts the Wasserstein distance (Theorem \ref{theo:exponential decay}). As a corollary one obtains a unique sequence of probability measures $(\nu_n)$ with $(\P_n^k)^\ast(\nu_{n+k})= \nu_n$ and a spectral gap for $\P_n^k$ acting on $\nu_n$-integrable functions with $\nu_n$-integrable local Hölder coefficients (Theorem \ref{theo:exponential decay for L1}). 
With these preparations, the strong invariance principle of Cuny \& Merlev\`ede (\cite{CunyMerlevede:2015}) is applicable and gives rise to a law of the iterated logarithm for functions with finite fourth moment (Theorem \ref{theo:invpp}). It then remains to verify that this result is applicable to continued fractions. As a first step, it turns out that it is necessary to consider the potential $\varphi_n(x) := \varphi(x)^{(\gamma_n+1)/2}$, where $\varphi$ refers to the geometric potential. Therefore, the canonical approach in this context is to consider sequences of probability measures $(\nu_n)$ instead of a single reference measure. Moreover, as $(\alpha_n)$ does not grow too slow, the asymptotic behavior of the integrals with respect to $(\nu_n)$ is accessible (Lemma \ref{lemma:Estimate_log}), which then allows to determine the asymptotic variance of the relevant, unbounded observables. The law of large numbers then allows to prove absolute continuity with respect to the Hausdorff measure, provided that $(\alpha_n)$ does not grow too fast (Proposition \ref{prop:HD-dimension}). However, as $\gamma_n$ tends to $0$, it is unclear if it is possible to extend the approach to $\alpha$ with $\dim_{\hbox{\tiny H}} (X_\alpha) < 1/2$.     

The main novelty of proof is based on the fact that the problem under consideration does not admit a common reference measure by construction. Therefore, there is no common invariant function space to the associated transfer operators, implying that important tools from spectral theory like quasi-compactness are not immediately applicable. However, based on an idea by Hairer \& Mattingly (\cite{HairerMattingly:2008}), it is possible to replace the concept of a spectral gap by a contraction of the Vaserstein metric on probability measures. The idea was adapted to non-stationary shift spaces in \cite{Stadlbauer:2015} for normalised potentials and then refined in \cite{BessaStadlbauer:2016} by considering quotients of operators. In here, we combine these and extend them in Theorem \ref{theo:exponential decay for L1} to functions in $L^1(\nu_n)$ with integrable local H\"older coefficients. This is of importance as we are now able to include unbounded functions (like $\log x_k$) in our analysis and do not have to take care about invariant measures which do not exist in the context of non-stationary shift spaces (see \cite{Fisher:2009} or \cite[Prop. 2.2]{BessaStadlbauer:2016}). 
The method based on the contraction of the Wasserstein metric is significantly different to the recent approaches to fiberwise quasi-compactness for random operators by Dragicevic, Froyland, González-Tokman \& Vaienti in \cite{DragicevicFroylandGonzalez-Tokman:2016a,DragicevicFroylandGonzalez-Tokman:2017a} as 
 non-stationary operators by definition do not have a spectrum.
 
For families of maps acting on the same space with respect to a common reference measure, there are several known results for central limit theorems and invariance principles. Namely, Heinrich (\cite{Heinrich:1996}) and Conze \& Raugi (\cite{ConzeRaugi:2007}) obtained central limit theorems for sequential dynamical systems given by a family of expanding maps of the interval. This result was improved recently through application of the strong invariance principle of  Cuny \& Merlev\`ede as above by Haydn, Nicol, Török \& Vaienti in the context of sequential expanding maps (\cite{HaydnNicolTorok:2017}) and by Nicol, Török \& Vaienti (\cite{NicolTorokVaienti:2016}) for sequential intermittent systems.

\section{Non-stationary shift spaces and decay of correlations}

In this section, we recall the definition of the non-stationary full shift, the associated family of Ruelle operators and related results. In here, we closely follow the notation in \cite{Stadlbauer:2015,BessaStadlbauer:2016}. For each $n \in \N = \N \cup \{0\}$, choose a subset $\cW_n \neq \emptyset$ of $\N$. We  refer to the sequence shift spaces $(X_n)$ and maps $(T_n)$ given by  
\begin{align*}
X_n &:=\left\{ (x_n, x_{n+1},\ldots):\, x_i \in \cW_i \,\,\forall i=n,n+1,\ldots
\right\},\\ 
T_n &:  X_n \to X_{n+1}, \quad (x_n, x_{n+1},\ldots ) \mapsto  (x_{n+1},x_{n+2},\ldots ) 
\end{align*}
as the \emph{non-stationary full shift} associated with $(\cW_n)$. In complete analogy to classical shift spaces, we will refer to $\cW_n^k := \left\{ (x_n, x_{n+1},\ldots, x_{n+k-1}): x_i \in \cW_i \right\}$ as the set of words of length $k$ at position $n$ and, for $w = (x_n, x_{n+1},\ldots, x_{n+k-1})\in \cW_n^k$, and to
\[ [w] := \left\{ (y_n, y_{n+1},\ldots) \in X_n:\, y_i=x_i, \,\forall i=n,n+1,\ldots, n+k-1
\right\} \subset X_n\]
as the cylinder associated to $w$. Let $\mathcal F_n$ be the $\sigma$-algebra generated by all cylinders in $X_n$. The iterates are maps from $X_n$ to $X_{n+k}$, which are given by $T_n^k := T_{n+k-1} \circ\cdots\circ T_n$. Furthermore, as  $T_n^k |_{[w]}$ is invertible for each $w \in \cW_n^k$, its inverse $\tau_w:X_{n+k} \to [w]$ is well defined. 
As in case of stationary shifts, the $X_n$ are Polish spaces with respect to the metric 
\[
d_r((x_0,x_1,x_2,\ldots),(y_0,y_1,y_2,\ldots)) := r^{\min\{i: x_i \neq y_i\} }
\] 
for $r \in (0,1)$. The value of $r$ is fixed throughout the paper. This metric is compatible with the topology 
generated by cylinder sets and the $T_n$ are uniformly expanding as $d_r(T_n(x),T_n(y)) = r^{-1} d_r(x,y)$.
We now introduce the relevant function spaces and operators. For  $f:X_n \to \R$ and $r \in \R$, we refer to  
 \[ D_r(f) := \sup \left\{ |f(x)-f(y)|/r^{k} : x,y \in [w], w \in \mathcal{W}_n^k, k=1,2, \ldots    \right\} \]
 as the $r$-H\"older coefficient of $f$. The space of $r$-H\"older functions is defined by 
 \[ \mathcal{H}^r_n := \{f : \|f \|_{\mathcal{H}} < \infty\}, \hbox{ with } \|f \|_{\mathcal{H}} := \|f\|_\infty + D_r(f).\]
In comparison, we denote the Lipschitz norm by $$D(f):=\sup\left\{ |f(x)-f(y)|/d_r(x,y) : x,y \in X_n   \right\}.$$
Note that since $X_n$ has finite diameter, $D(f)\leqslant 2\|f\|_\mathcal H$ and on the other hand if $D(f)$ is finite then $\|f\|_\mathcal H$ is also finite. We say that a sequence $\{f_n\}$ has  \emph{uniformly bounded H\"older coefficients} if there exists $r \in (0,1)$ such that $\sup_{n \in \N_0} D_r(f_n) < \infty$. Now assume that, for each $n\in \N_0$, there is a given (multiplicative) potential $\varphi_n:X_n \to (0,\infty)$ such that $\{\log \varphi_n\}$ has uniformly bounded H\"older coefficients. Then the $n$-th Ruelle operator is defined by, 
 for $x \in X_{n+1}$ and  $f:X_{n} \to [0,\infty)$ 
\[ \bL_{n}(f)(x) = \sum_{w \in \cW_n} \varphi_n(\tau_w(x)) f(\tau_w(x)).\]
By adapting standard arguments, it easily can be seen that, if $\|\bL_{n}(\1)\|_\infty < \infty$, then $ \bL_{n}$ acts on $r$-H\"older functions. 
In analogy to $T_n^k$, set $\bL_n^k := \bL_{n+k-1} \circ \cdots \circ \bL_n$. Furthermore, set
\begin{equation}\nonumber
\label{eq:ratio-operators}
\P^k_{n}(f):= 
\frac{\bL_{n}^k( f \cdot \bL_{0}^n(\1))}{\bL_0^{n+k}( \1)}.
\end{equation}
By following the lines of the proof of Lemma 2.1 in \cite{BessaStadlbauer:2016}, it  follows that 
$\P^k_{n}: \mathcal{H}^r_n \to \mathcal{H}^r_{n+k}$, provided that $\|\bL_{m}(\1)\|_\infty < \infty$ for $n \leqslant m < n+k$. Furthermore, $\P^k_{n}(\1)=\1$ and a Doeblin-Fortet inequality holds. In fact, the following statement about decay of correlations holds, which is a straightforward adaption of Theorem 2.1 in \cite{BessaStadlbauer:2016} to the setting of countable shift spaces. 

\begin{theorem}\label{theo:exponential decay}
 Assume that $(\log \varphi_n: n \in \N_0)$ has uniformly bounded H\"older coefficients and that  
$\|\bL_{n}(\1)\|_\infty < \infty$ for all $n \in \N_0$. Then there exist $C \geqslant 1$, $s \in (0,1)$ and Borel probability measures $\nu_n$ on $(X_n,\mathcal F_n)$ for $n \in \N_0$ such that  
for all $f \in  \mathcal{H}^r_n$ 
\begin{enumerate}
\item $D(\P_{n}^k(f)) \leqslant C s^k D(f), \forall k\in\mathbb N_0$.
\item $\| \P_{n}^k(f) - \int f d\nu_n\1\|_\infty \leqslant C s^k D(f), \forall k\in\mathbb N_0$.
\item $\int f d\nu_n = \lim_{k\to \infty} \P_{n}^k(f) (x_k)$ for any sequence $(x_k)$ with $x_k \in X_{n+k}$.
\end{enumerate}
\end{theorem}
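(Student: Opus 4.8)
The strategy is to reduce all three assertions, together with the construction of the measures $\nu_n$, to a single estimate: the dual operators contract the $1$-Wasserstein distance $W$ associated to $d_r$ exponentially in $k$ and uniformly in $n$,
\begin{equation}\label{eq:star}
 W\big((\P_n^k)^\ast\mu,\,(\P_n^k)^\ast\mu'\big)\;\leqslant\; C s^k\, W(\mu,\mu')\qquad\text{for all }n,\,k\text{ and all }\mu,\mu'\in\mathcal P(X_{n+k}).
\end{equation}
Granting \eqref{eq:star} (in particular $W((\P_n^k)^\ast\delta_x,(\P_n^k)^\ast\delta_y)\leqslant C s^k d_r(x,y)$, and $(\P_n^k)^\ast$ is a $W$-contraction, hence continuous), the rest is soft. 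Since $\P_n^k(f)(x)=\int f\,d\big((\P_n^k)^\ast\delta_x\big)$, Kantorovich duality and \eqref{eq:star} give $|\P_n^k(f)(x)-\P_n^k(f)(y)|\leqslant D(f)\,W\big((\P_n^k)^\ast\delta_x,(\P_n^k)^\ast\delta_y\big)\leqslant C s^k D(f)\,d_r(x,y)$, which is (1); that the left-hand side is finite, i.e.\ $\P_n^k(f)\in\mathcal H^r_{n+k}$, is the statement $\P_n^k\colon\mathcal H^r_n\to\mathcal H^r_{n+k}$ recalled above. For fixed $n$ and any probability measures $\mu_m$ on $X_{n+m}$, the identity $(\P_n^{m+\ell})^\ast\mu_{m+\ell}=(\P_n^{m})^\ast\big((\P_{n+m}^{\ell})^\ast\mu_{m+\ell}\big)$ together with \eqref{eq:star} and $\diam X_{n+m}\leqslant1$ shows that $\big((\P_n^m)^\ast\mu_m\big)_m$ is $W$-Cauchy; since $X_n$ is Polish, $(\mathcal P(X_n),W)$ is complete, so the sequence converges to a Borel probability measure $\nu_n$, and \eqref{eq:star} forces this limit to be independent of $(\mu_m)$. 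Writing $\nu_{n+k}=\lim_\ell(\P_{n+k}^\ell)^\ast\rho_\ell$ and using $W$-continuity of $(\P_n^k)^\ast$ gives $(\P_n^k)^\ast\nu_{n+k}=\lim_\ell(\P_n^{k+\ell})^\ast\rho_\ell=\nu_n$; combining this with \eqref{eq:star} yields $W\big((\P_n^k)^\ast\delta_x,\nu_n\big)=W\big((\P_n^k)^\ast\delta_x,(\P_n^k)^\ast\nu_{n+k}\big)\leqslant C s^k$ uniformly in $x$, hence $\|\P_n^k(f)-\int f\,d\nu_n\,\1\|_\infty\leqslant D(f)\,\sup_x W\big((\P_n^k)^\ast\delta_x,\nu_n\big)\leqslant C s^k D(f)$, which is (2); and (3) is the pointwise form of (2).

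To prove \eqref{eq:star} I would build an explicit coupling exploiting the full-branch structure of the non-stationary shift. Writing the branch weights of $\bL_n^k$ multiplicatively one has $(\P_n^k)^\ast\delta_x=\sum_{w\in\cW_n^k}p_w(x)\,\delta_{\tau_w x}$ with $p_w(x)\geqslant0$ and $\sum_{w}p_w(x)=\P_n^k(\1)(x)=1$, so $(\P_n^k)^\ast\delta_x$ is obtained by prepending a random word $W=(W_0,\dots,W_{k-1})$ of law $p_\bullet(x)$ to the tail of $x$, and similarly for $y$. I would couple $(\P_n^k)^\ast\delta_x$ and $(\P_n^k)^\ast\delta_y$ by generating the two words symbol by symbol, using at position $j$ a maximal coupling of the conditional law of $W_j$ given the common prefix produced so far. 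The quantitative input is that, uniformly in $n,k$, in the prefix, and in $j$, the conditional laws of $W_j$ under $(\P_n^k)^\ast\delta_x$ and under $(\P_n^k)^\ast\delta_y$ differ in total variation by at most $C'\,r^{k-1-j}\,d_r(x,y)$. Indeed that conditional law is a normalised ratio of Ruelle sums of the form $\bL_{n+j+1}^{k-j-1}(g)(x)$, and $x$ enters such a sum only through its coordinates at level $\geqslant n+k$, that is, at depth $k-1-j$ inside every preimage occurring in it; hence the uniformly bounded H\"older coefficients of $\{\log\varphi_n\}$ — together with the uniform bound $\sup_m D_r(\log\bL_0^m(\1))<\infty$, which follows by iterating the Doeblin--Fortet estimate $D_r(\log\bL_m(g))\leqslant r\big(D_r(\log\varphi_m)+D_r(\log g)\big)$ starting from $g=\1$ — damp the base-point dependence by exactly the factor $r^{k-1-j}$.

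Given this bound, the probability that the two coupled words first disagree at position $j$ is at most $\min\{1,\,C'r^{k-1-j}d_r(x,y)\}$, in which case $d_r(\xi,\eta)=r^j$ for the resulting points $\xi\sim(\P_n^k)^\ast\delta_x$ and $\eta\sim(\P_n^k)^\ast\delta_y$; on the complementary event the words agree and $d_r(\xi,\eta)=r^k d_r(x,y)$ because the tails of $x$ and $y$ still differ. Summing over $j$,
\[
 W\big((\P_n^k)^\ast\delta_x,(\P_n^k)^\ast\delta_y\big)\;\leqslant\;\mathbb E\big[d_r(\xi,\eta)\big]\;\leqslant\; r^k d_r(x,y)+\sum_{j=0}^{k-1}\min\{1,\,C'r^{k-1-j}d_r(x,y)\}\,r^j\;\leqslant\; C''\,(k+1)\,r^{k-1}\,d_r(x,y),
\]
and $(k+1)r^{k-1}\leqslant C s^k$ for any chosen $s\in(r,1)$ and a suitable $C=C(s)\geqslant1$, which is the Dirac case of \eqref{eq:star}; a standard gluing argument, composing with an optimal $d_r$-coupling of $\mu$ and $\mu'$, then upgrades it to the full statement \eqref{eq:star}.

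The main obstacle is the distortion estimate underlying \eqref{eq:star}: one must write the conditional symbol laws as explicit ratios $\bL_{n+j+1}^{k-j-1}(g_w)(x)/\bL_{n+j+1}^{k-j-1}(\bar g_w)(x)$, check that the functions $g_w,\bar g_w$ have H\"older coefficients bounded uniformly in all parameters, and show that the dependence of these ratios on $x$ decays like $r^{k-1-j}$ — the exponential gain over the naive loss of a factor polynomial in $k$ hinges entirely on this. It is also this uniformity, in particular $\sup_m D_r(\log\bL_0^m(\1))<\infty$ obtained from the Doeblin--Fortet/Lasota--Yorke estimate for the family $(\bL_m)$, that makes the constant $C$ in \eqref{eq:star} independent of $n$. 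A secondary point needing care is the measurability of the symbol-by-symbol maximal coupling; and one should keep in mind that, since the alphabets $\cW_n$ may be infinite, no uniform minorisation is available and hence no contraction in total variation — which is exactly the reason for working with $W$ and the metric $d_r$ rather than with a spectral gap or quasi-compactness.
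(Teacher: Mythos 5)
Your reduction of the theorem to a uniform Wasserstein contraction of the duals, together with the soft derivation of (1)--(3) from it (via Kantorovich duality, completeness of $(\mathcal P(X_n),W)$, and the cocycle identity $(\P_n^k)^\ast\nu_{n+k}=\nu_n$), is exactly the structure of the paper's argument, which outsources the contraction estimate to \cite{BessaStadlbauer:2016}. So the first half of your proposal is sound and matches the paper's route.

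However, the sketched proof of the contraction itself has a genuine gap, located precisely at the estimate you flag as the crux. You generate the random word $W=(W_0,\dots,W_{k-1})$ outer to inner and claim that the conditional law of $W_j$ given $W_{<j}$ depends on the base point only at depth $k-1-j$, hence differs between $x$ and $y$ in total variation by at most $C'r^{k-1-j}d_r(x,y)$. But the conditional law at position $j$ is $\bL_{n+j+1}^{k-j-1}(g_{u,a})(x)\big/\bL_{n+j+1}^{k-j-1}(g_u)(x)$, and while the argument $g_{u,a}(\tau_v x)$ indeed sees $x$ only at depth $k-j-1$, the Ruelle weight $\prod_{l=j+1}^{k-1}\varphi_{n+l}(\tau_{v_{\geqslant l-j-1}}x)$ sees $x$ at every depth from $1$ up to $k-j-1$. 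The shallowest factor $\varphi_{n+k-1}(\tau_{v_{k-j-2}}x)$ contributes an undamped term of order $r\,d_r(x,y)$, and summing the geometric series over $l$ gives only $|\log\frac{\varphi_v(x)}{\varphi_v(y)}|\ll d_r(x,y)$ with no gain in $k-1-j$. Since the weights are summed over $v$ rather than cancelling in the ratio, the correct bound on the conditional total variation at position $j$ is $O(d_r(x,y))$, not $O(r^{k-1-j}d_r(x,y))$. Your final sum then becomes $\sum_{j=0}^{k-1}\min\{1,C'd_r(x,y)\}\,r^j\ll d_r(x,y)$, which yields Lipschitz continuity but no contraction.

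Reversing the order (generating $W_{k-1}$ first, then $W_{k-2}$ given $W_{k-1}$, \dots) does give a geometric damping: the conditional law of $W_j$ given $W_{>j}$ is proportional to $\bL_0^{n+j}(\1)(\tau_{(a,v)}x)\,\varphi_{n+j}(\tau_{(a,v)}x)$, so its total variation between $x$ and $y$ is bounded by $C_2\,r^{k-j}d_r(x,y)$ while no disagreement has yet occurred, and by $C_2\,r^{j_0-j}$ once the innermost disagreement is at $j_0$. But running the resulting renewal estimate honestly gives $\mathbb E[d_r(\xi,\eta)]\leqslant\big((1+C_2)r\big)^k d_r(x,y)$, which is a contraction only under the unforced condition $(1+C_2)r<1$. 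This is the point where the one-step symbol-by-symbol coupling is too crude: the actual proof in \cite{Stadlbauer:2015,BessaStadlbauer:2016} uses the Hairer--Mattingly mechanism, combining a Doeblin--Fortet/Lasota--Yorke-type Lipschitz bound with a uniform minorisation (coupling a positive fraction of the mass of $(\P_n^k)^\ast\delta_x$ and $(\P_n^k)^\ast\delta_y$ onto a common measure for $k$ fixed large) and measuring distances in a modified metric $\min\{1,\beta d_r\}$; it is the interplay of these two ingredients, not a single maximal coupling, that produces a contraction whose rate is independent of the size of the H\"older constants. So the structure of your reduction is right, but the core contraction estimate needs the two-pronged Hairer--Mattingly argument rather than the claimed per-symbol TV decay.
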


\begin{proof} Observe that the proof of Theorem 2.1 in \cite{BessaStadlbauer:2016} 
in verbatim applies to our setting. In particular, the first assertion is a consequence of (2.5) in  
\cite{BessaStadlbauer:2016}. Furthermore, the action on probability measures by the dual of $\P^k_{n}$ for $k$ sufficiently large is a strict contraction with respect to the Wasserstein metric ((2.4) in \cite{BessaStadlbauer:2016}). The existence of $\nu_n$ as defined in (3) follows from that. Hence, it remains to show (2). In order to do so, observe that $(\P_n^k)^\ast(\nu_{n+k})= \nu_{n}$ as $ \P_{n+k}^l \circ \P_n^k =  \P_{n}^{l+k}$. The second assertion then follows from Kantorovich's duality applied to the measure $\nu_{n+k}$ and the Dirac measure supported on some point in $X_{n+k}$. 
  \end{proof}

In order to include the functions of interest in the context of continued fractions, which are unbounded and therefore not in $\mathcal{H}^r_n$, we extend Theorem \ref{theo:exponential decay} to the following function space. For  $f:X_n \to \R$ and $r \in \R$, we consider the locally constant function $\mathcal{D}_r(f): X_n \to [0,\infty]$ defined by, for every $x \in  [a]$, $a \in \mathcal{W}_n^1$,  
 \[\mathcal{D}_r(f)(x) :=  \sup \left\{ |f(y_1)-f(y_2)|/r^{k} : y_1,y_2 \in [a]\cap  [w], w \in \mathcal{W}_n^k, k=1,2, \ldots    \right\} .\]
Note that, by construction, $\mathcal{D}_r(f)$ is constant on cylinders of length 1.  
 Furthermore, set $\|f\|_{n,p}:= (\int |f|^p d\nu_n)^{1/p}$. Clearly $\|\mathcal D_r(f)\|_{n,1}\leqslant D_r(f)$.  Define the space of functions with integrable local $r$-H\"older coefficients by
 \[ \mathcal{H}^{\tiny\hbox{\it loc}}_n := \left\{f: X_n \to \R \,\big| \, \|f\|_{n,1} < \infty, \|\mathcal{D}_r(f)\|_{n,1} < \infty \right\}.\]
\begin{theorem}\label{theo:exponential decay for L1} Under the assumptions of Theorem \ref{theo:exponential decay}, there exist $C>1$ and $s\in (0,1)$ such that for all  $f \in  \mathcal{H}^{\tiny\hbox{\it loc}}_n$ and  $k \in \N$, 
\begin{enumerate}
\item $D(\P_{n}^k(f)) \leqslant C s^k (\|f\|_{n,1}  + \|\mathcal{D}_r(f)\|_{n,1})$,
\item $\| \P_{n}^k(f) - \int f d\nu_n\1\|_\infty \leqslant  C s^k (\|f\|_{n,1}  + \|\mathcal{D}_r(f)\|_{n,1})$.
\end{enumerate}
\end{theorem}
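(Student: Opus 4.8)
The plan is to deduce the statement from Theorem~\ref{theo:exponential decay} by showing that a \emph{single} application of $\P^1_n$ already carries $\mathcal H^{\tiny\hbox{\it loc}}_n$ into the honest H\"older space $\mathcal H^r_{n+1}$ with a uniformly controlled norm. Precisely, the crux is: there is a constant $C_0$, independent of $n$, such that $\P^1_n(\mathcal H^{\tiny\hbox{\it loc}}_n)\subseteq\mathcal H^r_{n+1}$ and $\|\P^1_n(f)\|_{\mathcal H}\leqslant C_0(\|f\|_{n,1}+\|\mathcal D_r(f)\|_{n,1})$ for every $f\in\mathcal H^{\tiny\hbox{\it loc}}_n$. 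Granting this, the theorem is immediate: fix $k\in\N$, set $g:=\P^1_n(f)\in\mathcal H^r_{n+1}$, and note that $\P^k_n(f)=\P^{k-1}_{n+1}(g)$ while $\int g\,d\nu_{n+1}=\int f\,d\nu_n$, the latter because $(\P^1_n)^\ast\nu_{n+1}=\nu_n$. Applying assertions (1) and (2) of Theorem~\ref{theo:exponential decay} to $g$ with exponent $k-1\in\N_0$ and using $D(g)\leqslant 2\|g\|_{\mathcal H}\leqslant 2C_0(\|f\|_{n,1}+\|\mathcal D_r(f)\|_{n,1})$, both inequalities follow once the harmless factor $s^{-1}$ is absorbed into the constant.

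To prove the crux I would expand $\P^1_n(f)=\sum_{a\in\cW_n}\psi_a\cdot(f\circ\tau_a)$ with $\psi_a:=\P^1_n(\1_{[a]})\geqslant 0$, so that $\sum_a\psi_a=\1$. Two facts drive the estimate. First, the uniform regularity of these normalised weights inherited from \cite{BessaStadlbauer:2016}: there is $C_1\geqslant 1$, independent of $a$ and $n$, with $\psi_a(y)\leqslant C_1\psi_a(y')$ and $|\psi_a(y)-\psi_a(y')|\leqslant C_1\,\psi_a(y)\,d_r(y,y')$ for all $y,y'\in X_{n+1}$; integrating the first bound against $\nu_{n+1}$ and using $\int\psi_a\,d\nu_{n+1}=\nu_n([a])$ yields the pointwise domination $\psi_a(y)\leqslant C_1\nu_n([a])$. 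Second, an elementary bound linking pointwise size, integral and local oscillation: since $[a]\cap[w]\neq\emptyset$ with $w\in\cW_n^1$ forces $w=a$, the oscillation of $f$ on $[a]$ is at most $r\,\mathcal D_r(f)|_{[a]}$, whence for any $z_a\in[a]$ one has $\sup_{[a]}|f|\leqslant|f(z_a)|+r\,\mathcal D_r(f)|_{[a]}$ and, integrating $|f(z_a)|\leqslant|f(x)|+r\,\mathcal D_r(f)|_{[a]}$ over $[a]$ against $\nu_n$, also $\nu_n([a])|f(z_a)|\leqslant\int_{[a]}|f|\,d\nu_n+r\,\mathcal D_r(f)|_{[a]}\,\nu_n([a])$. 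As $\mathcal D_r(f)$ is constant on each $[a]$, summing gives $\sum_a\psi_a(y)\sup_{[a]}|f|\leqslant C_1(\|f\|_{n,1}+2r\|\mathcal D_r(f)\|_{n,1})$, which already bounds $\|\P^1_n(f)\|_\infty$. For the H\"older seminorm one writes $\P^1_n(f)(y)-\P^1_n(f)(y')=\sum_a\psi_a(y)[(f\circ\tau_a)(y)-(f\circ\tau_a)(y')]+\sum_a[\psi_a(y)-\psi_a(y')](f\circ\tau_a)(y')$; the first sum is $\leqslant d_r(y,y')\sum_a\psi_a(y)\,r\,\mathcal D_r(f)|_{[a]}$ because $\tau_a$ contracts $d_r$ by the factor $r$, and the second is $\leqslant C_1\,d_r(y,y')\sum_a\psi_a(y)\sup_{[a]}|f|$ by the log-Lipschitz bound on $\psi_a$. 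Both are at most a constant multiple of $d_r(y,y')(\|f\|_{n,1}+\|\mathcal D_r(f)\|_{n,1})$, which proves the crux.

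The main obstacle is the uniform-in-$(a,n)$ distortion estimate for the $\psi_a$: this is where the standing hypothesis that $\{\log\varphi_n\}$ has uniformly bounded H\"older coefficients, together with the regularity of $\{\log\bL^n_0(\1)\}$ built into the normalisation $\P^k_n$, is really used. It is the same estimate that underlies the Doeblin--Fortet inequality and the Wasserstein contraction behind Theorem~\ref{theo:exponential decay}, so it can be quoted from the proof of Lemma~2.1 in \cite{BessaStadlbauer:2016}. Beyond that, the only genuinely new difficulty is one of summability: unbounded $f$ forces us to sum the weights $\psi_a(y)$ against $\sup_{[a]}|f|$ rather than against $\|f\|_\infty$, and it is precisely the domination $\psi_a(y)\leqslant C_1\nu_n([a])$ together with the oscillation--integral inequality above that make these series converge, with limit controlled by $\|f\|_{n,1}+\|\mathcal D_r(f)\|_{n,1}$.
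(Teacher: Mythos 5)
Your proposal is correct and takes essentially the same route as the paper: you reduce the theorem to showing that a single application of $\P^1_n$ maps $\mathcal H^{\tiny\hbox{\it loc}}_n$ into $\mathcal H^r_{n+1}$ with norm controlled by $\|f\|_{n,1}+\|\mathcal D_r(f)\|_{n,1}$, and then invoke Theorem~\ref{theo:exponential decay} for $g=\P^1_n(f)$, absorbing $s^{-1}$ into the constant. The paper's Lemma proves exactly this; its ``$\P^1_n(g)\asymp\int g\,d\nu_n$ for $g$ constant on cylinders'' observation is equivalent to your pointwise domination $\psi_a(y)\leqslant C_1\nu_n([a])$ (take $g=\1_{[a]}$), and its sandwich by $f_\pm$ is your oscillation--integral inequality; the only cosmetic difference is that you split the Lipschitz estimate into two sums over the normalised weights $\psi_a$ while the paper splits the unnormalised numerator $\bL_n(f\cdot\bL_0^n(\1))$ into three terms and then divides by $\bL_0^{n+1}(\1)$.
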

The theorem immediately follows from applying Theorem \ref{theo:exponential decay} to $\P_n^1(f)$ and the following Lemma showing that  $\P_{n}^1$ maps   $ \mathcal{H}^{\tiny\hbox{\it loc}}_n$ into $\mathcal{H}^r_{n+1}$.
\begin{lemma} There exists $C>1$ such that for all $f \in  \mathcal{H}^{\tiny\hbox{\it loc}}_n$, 
\begin{enumerate}
\item $\| \P_{n}^1(f)\|_\infty \leqslant  C \left( \|f\|_{n,1}   + \|\mathcal{D}_r(f)\|_{n,1}\right)$,
\item $D_r(\P_{n}^1(f)) \leqslant C \left(\|f\|_{n,1}    + \|\mathcal{D}_r(f)\|_{n,1}\right)$.
\end{enumerate}
\end{lemma}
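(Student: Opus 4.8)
The plan is to exploit that $\P_n^1$ is a convex–averaging operator and to convert $L^1(\nu_n)$–control into uniform control through the family $(\nu_n)$ of Theorem~\ref{theo:exponential decay}. For $x\in X_{n+1}$ and $a\in\cW_n$ write $p_a(x):=\varphi_n(\tau_a(x))\,\bL_0^n(\1)(\tau_a(x))\,/\,\bL_0^{n+1}(\1)(x)$, so that $p_a\geqslant 0$, $\P_n^1(g)(x)=\sum_{a\in\cW_n}p_a(x)\,g(\tau_a(x))$ for $g\geqslant 0$, and $\sum_{a}p_a(x)=\P_n^1(\1)(x)=1$; in particular $\P_n^1(g\1_{[a]})(x)=p_a(x)g(\tau_a(x))$. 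Since $\P_{n+1}^{l}\circ\P_n^1=\P_n^{l+1}$, one has $(\P_n^1)^\ast(\nu_{n+1})=\nu_n$, and hence
\[ \int_{X_{n+1}}\P_n^1(g\1_{[a]})\,d\nu_{n+1}=\int_{[a]}g\,d\nu_n \qquad (g\geqslant 0),\]
which, summed over $a$, recovers $\int\P_n^1(g)\,d\nu_{n+1}=\int g\,d\nu_n$.

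The second ingredient is a uniform distortion estimate for the weights: there are $A,B\geqslant 1$, independent of $n$ and $a$, with $p_a(x)\leqslant A\,p_a(y)$ and $|p_a(x)-p_a(y)|\leqslant B\,p_a(x)\,d_r(x,y)$ for all $x,y\in X_{n+1}$. This is exactly the type of bound underlying Lemma~2.1 and Theorem~2.1 of \cite{BessaStadlbauer:2016}: decomposing $\log p_a=\log\varphi_n\circ\tau_a+\log\bL_0^n(\1)\circ\tau_a-\log\bL_0^{n+1}(\1)$, the first two terms are evaluated only along $\tau_a$, i.e.\ at points of the common $1$-cylinder $[a]$, where $\{\log\varphi_n\}$ carries a uniform $r$–H\"older bound and $\tau_a$ contracts $d_r$ by the factor $r$, while $\{\log\bL_0^m(\1):m\}$ has an $m$–independent Lipschitz bound because the $T_0^m$–preimages defining $\bL_0^m(\1)$ lie in $m$–deep cylinders; this yields $|\log p_a(x)-\log p_a(y)|\leqslant M\,d_r(x,y)$ for a uniform $M$, and exponentiation gives $A$ and $B$. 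Establishing this estimate uniformly in $n$ — and in particular for $x,y$ lying in \emph{distinct} $1$-cylinders of $X_{n+1}$ — is the only real obstacle; the rest is bookkeeping powered by the averaging property and the duality above.

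For part~(1) I would compare $|\P_n^1(f)(x)|$ with a $\nu_{n+1}$–average of $|\P_n^1(f)|$. Fix $x$; for every $y\in X_{n+1}$ we have $\tau_a(x),\tau_a(y)\in[a]$ with $d_r(\tau_a(x),\tau_a(y))\leqslant r$, so $|f(\tau_a(x))|\leqslant|f(\tau_a(y))|+\mathcal{D}_r(f)(\tau_a(y))$ (using that $\mathcal{D}_r(f)$ is constant on $[a]$), whence
\[ p_a(x)\,|f(\tau_a(x))|\;\leqslant\; A\,p_a(y)\bigl(|f(\tau_a(y))|+\mathcal{D}_r(f)(\tau_a(y))\bigr)\;=\;A\,\Bigl(\P_n^1(|f|\1_{[a]})(y)+\P_n^1(\mathcal{D}_r(f)\1_{[a]})(y)\Bigr).\]
Integrating in $y$ against $\nu_{n+1}$ and using the displayed duality gives $p_a(x)|f(\tau_a(x))|\leqslant A\bigl(\int_{[a]}|f|\,d\nu_n+\int_{[a]}\mathcal{D}_r(f)\,d\nu_n\bigr)$; summing over $a\in\cW_n$ — all terms being nonnegative, so the interchange of sum and integral is legitimate by Tonelli and finiteness is automatic — yields $|\P_n^1(f)(x)|\leqslant\sum_a p_a(x)|f(\tau_a(x))|\leqslant A\bigl(\|f\|_{n,1}+\|\mathcal{D}_r(f)\|_{n,1}\bigr)$.

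For part~(2) I would split
\[ \P_n^1(f)(x)-\P_n^1(f)(y)=\sum_{a}\bigl(p_a(x)-p_a(y)\bigr)f(\tau_a(x))+\sum_{a}p_a(y)\bigl(f(\tau_a(x))-f(\tau_a(y))\bigr).\]
The first sum is bounded in modulus by $B\,d_r(x,y)\sum_a p_a(x)|f(\tau_a(x))|=B\,d_r(x,y)\,\P_n^1(|f|)(x)$, which part~(1) applied to $|f|$ (together with $\mathcal{D}_r(|f|)\leqslant\mathcal{D}_r(f)$) bounds by $B\,d_r(x,y)\,A(\|f\|_{n,1}+\|\mathcal{D}_r(f)\|_{n,1})$. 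For the second sum, $|f(\tau_a(x))-f(\tau_a(y))|\leqslant\mathcal{D}_r(f)(\tau_a(x))\,d_r(\tau_a(x),\tau_a(y))\leqslant r\,d_r(x,y)\,\mathcal{D}_r(f)(\tau_a(y))$, so it is at most $r\,d_r(x,y)\,\P_n^1(\mathcal{D}_r(f))(y)$; since $\mathcal{D}_r(f)$ is locally constant we have $\mathcal{D}_r(\mathcal{D}_r(f))\equiv 0$, so part~(1) applied to $\mathcal{D}_r(f)$ gives $\|\P_n^1(\mathcal{D}_r(f))\|_\infty\leqslant A\|\mathcal{D}_r(f)\|_{n,1}$. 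Adding the two estimates and dividing by $d_r(x,y)$ bounds $D_r(\P_n^1(f))$ (indeed $D(\P_n^1(f))$) by a multiple of $\|f\|_{n,1}+\|\mathcal{D}_r(f)\|_{n,1}$ depending only on $A,B,r$; enlarging the resulting constant so that it also majorises the constant of part~(1) and exceeds $1$ furnishes the single $C$ of the statement.
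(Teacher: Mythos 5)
Your argument is correct and follows essentially the same route as the paper: both rest on the bounded-distortion property of the normalized kernel (your $p_a$) inherited from the uniform H\"older bound on $\log\varphi_n$, the fact that $\mathcal D_r(f)$ is constant on $1$-cylinders, and the duality $(\P_n^1)^\ast\nu_{n+1}=\nu_n$ to convert $L^1(\nu_n)$-control into a pointwise bound. Your packaging of the three normalization factors $\varphi_n$, $\bL_0^n(\1)$, $\bL_0^{n+1}(\1)$ into a single weight $p_a$ with a uniform distortion estimate is a mild reorganization of the paper's sandwich argument for (1) and its three-term decomposition for (2), not a different proof.
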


\begin{proof}
Let $g$ be a non-trivial, non-negative function in $\mathcal{H}^{\tiny\hbox{\it loc}}_n$ which is constant on cylinders, that is, $\mathcal{D}_r(g)=0$. It follows from bounded distortion and $T_n([a])=X_{n+1}$ for all $a \in \cW_n^1$ that $\bL_{n}(g)(x) \asymp \bL_{n}(g)(y)$ for all $x,y \in X_n$. Hence,  $\P_{n}^1(g)(x) \asymp \P_{n}^1(g)(y)$ and 
\[ \P_{n}^1(g)(x) \asymp \int \P_{n}^1(g)(x) d\nu_{n+1}(x) =  \int  g d\nu_{n}.\] 
The first statement  follows from bounding $f$ from above and below by functions $f_+,f_-$ which are constant on cylinders such that $ f- \mathcal{D}_r(f) \leqslant f_- \leqslant f \leqslant f_+ \leqslant  f + \mathcal{D}_r(f)$ and using that  $\mathcal{D}_r(f)$ is by construction constant on cylinders. 

For the remaining statement, observe that by uniform H\"older continuity of $\log \varphi_n$, we have that    
\begin{align*}
& \;  \left| \bL_{n}(f  \cdot \bL_{0}^n(\1))(x) - \bL_{n}(f \cdot \bL_{0}^n(\1))(y) \right| / d(x,y)\\
\leqslant &\; \frac{1}{d(x,y)} \left(\sum_{w \in \cW_n} \varphi_n(\tau_w(x))\left|1 - \frac{\varphi_n(\tau_w(y))}{\varphi_n(\tau_w(x))}\right| \cdot |f \cdot \bL_{0}^n(\1)|(\tau_w(x)) \right.\\
 & +  \sum_{w \in \cW_n}  \varphi_n(\tau_w(y)) \bL_{0}^n(\1)(\tau_w(x)) |f(\tau_w(x)) -  f(\tau_w(y))|  \\
 & +  \left. \sum_{w \in \cW_n}  \varphi_n(\tau_w(y)) | f(\tau_w(y))| \bL_{0}^n(\1)(\tau_w(y))
 	\left|\frac{\bL_{0}^n(\1)(\tau_w(x))}{\bL_{0}^n(\1)(\tau_w(y))} -1 \right| \right)\\
\leqslant & \;  C\left(  \bL_{n}(|f| \cdot \bL_{0}^n(\1))(x) + r \bL_{n}( \mathcal{D}_r(f) \cdot  \bL_{0}^n(\1))(x) +  r\bL_{n}(|f| \cdot \bL_{0}^n(\1))(y) \right) .
\end{align*}
It easily follows from this that $  {D}_r(\P_n^1(f)) \ll   \P_{n}^1(|f|)(x) +  \P_{n}^1(\mathcal{D}_r(f))(x) \ll \|f\|_{n,1}    + \|\mathcal{D}_r(f)\|_{n,1} $.
  \end{proof}

We collect some  properties of the operator $\P_n^k$ and the measure $\nu_n$. We will use the results in this section freely without further references, and always denote by $\mathbb E$ and $\Var$ the expectation and the variance with respect to $\nu_0$.
\begin{proposition}The following equations hold for all $f\in  \mathcal{H}^{\tiny\hbox{\it loc}}_{n+k}, g\in \mathcal{H}^{\tiny\hbox{\it loc}}_n$.
\begin{enumerate}
\item $\P_n^{k}(f\circ T^k_n\cdot g)=f\cdot \P_n^{k}(g)$.
\item $\int f\circ T^k_n\cdot g~d\nu_n=\int f\cdot \P_n^{k}(g)~d\nu_{n+k}$.
\item $\mathbb E(g\circ T^n_0|(T^{n+k}_0)^{-1}\mathcal F_{n+k})=(\P_n^{k}(g))\circ T_0^{n+k}.$
\end{enumerate}
\end{proposition}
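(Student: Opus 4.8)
The three identities are purely formal consequences of the definition of the Ruelle operators, and I would establish them in the order in which they are stated. The starting point is the pointwise identity $\bL_n(u\circ T_n\cdot v)=u\cdot\bL_n(v)$, valid for every $v\geqslant 0$ on $X_n$ and every $u$ on $X_{n+1}$; it is immediate from $T_n\circ\tau_w=\mathrm{id}_{X_{n+1}}$ for all $w\in\cW_n$. An induction on $k$, using $T_n^{k+1}=T_{n+k}\circ T_n^k$ and applying the case $k=1$ twice at each step, upgrades this to $\bL_n^k(u\circ T_n^k\cdot v)=u\cdot\bL_n^k(v)$. Inserting $v=g\cdot\bL_0^n(\1)$ and $u=f$ into the definition $\P_n^k(h)=\bL_n^k\bigl(h\cdot\bL_0^n(\1)\bigr)/\bL_0^{n+k}(\1)$, and using $(f\circ T_n^k\cdot g)\cdot\bL_0^n(\1)=f\circ T_n^k\cdot\bigl(g\cdot\bL_0^n(\1)\bigr)$, then yields the first assertion $\P_n^{k}(f\circ T^k_n\cdot g)=f\cdot\P_n^{k}(g)$.

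The second assertion follows by integrating the first against $\nu_{n+k}$ and invoking the duality $(\P_n^k)^\ast\nu_{n+k}=\nu_n$ established in the proof of Theorem~\ref{theo:exponential decay}, so that $\int f\circ T^k_n\cdot g\,d\nu_n=\int\P_n^{k}(f\circ T^k_n\cdot g)\,d\nu_{n+k}=\int f\cdot\P_n^{k}(g)\,d\nu_{n+k}$. Taking $g\equiv\1$ and using $\P_n^k(\1)=\1$ gives, as a special case, the pushforward relation $(T_n^k)_\ast\nu_n=\nu_{n+k}$, which I will use below. For this step one only needs the identities to make sense, which is where some bookkeeping about function spaces is required: one reads them for $f,g$ such that $f\circ T_n^k\cdot g\in\mathcal{H}^{\tiny\hbox{\it loc}}_n$ (or at least is $\nu_n$-integrable with $\P_n^k$ applicable), which is clear when, say, $f$ is bounded and locally H\"older — the case needed in the sequel — with the general case handled via the preceding Lemma and an approximation argument.

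For the third assertion I would verify the two defining properties of $\mathbb E(\,\cdot\mid(T^{n+k}_0)^{-1}\mathcal F_{n+k})$ with respect to $\nu_0$. Measurability of $(\P_n^{k}(g))\circ T_0^{n+k}$ with respect to $(T^{n+k}_0)^{-1}\mathcal F_{n+k}$ is immediate, since $\P_n^k(g)$ is continuous, hence $\mathcal F_{n+k}$-measurable, and bounded by Theorem~\ref{theo:exponential decay for L1}. For the averaging property it suffices, by Dynkin's lemma, to verify $\int_A g\circ T^n_0\,d\nu_0=\int_A(\P_n^{k}(g))\circ T_0^{n+k}\,d\nu_0$ for $A=(T^{n+k}_0)^{-1}[w]$ with $[w]$ a cylinder in $X_{n+k}$, since such sets form a $\pi$-system generating $(T^{n+k}_0)^{-1}\mathcal F_{n+k}$; note that $\1_{[w]}$, being bounded, locally constant and $r$-H\"older, lies in $\mathcal{H}^{\tiny\hbox{\it loc}}_{n+k}$, so the second assertion applies to it. Writing $\1_A=\1_{[w]}\circ T_0^{n+k}=(\1_{[w]}\circ T_n^k)\circ T_0^n$, the left side equals $\int(\1_{[w]}\circ T_n^k)\cdot g\,d\nu_n=\int\1_{[w]}\cdot\P_n^k(g)\,d\nu_{n+k}$ by $(T_0^n)_\ast\nu_0=\nu_n$ and the second assertion at position $n$, while the right side equals $\int\1_{[w]}\cdot\P_n^k(g)\,d\nu_{n+k}$ by $(T_0^{n+k})_\ast\nu_0=\nu_{n+k}$; so the two sides agree. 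Integrability throughout is guaranteed by $\|g\|_{n,1}<\infty$, the boundedness of $\P_n^k(g)$, and the pushforward relations.

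The genuine difficulty here is essentially nil — this is the standard transfer-operator calculus transported to the non-stationary setting — so the main point requiring care will be the bookkeeping about domains of definition and $\nu$-integrability, in particular the passage from H\"older functions to indicators of cylinders in the third assertion. I would dispose of this by the $\pi$-system argument above; equivalently, one may first extend the second assertion to all bounded measurable $f$ by a monotone-class argument, after which the third assertion is immediate from the definition of conditional expectation.
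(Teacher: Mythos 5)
Your proof is correct and follows essentially the same route as the paper's: part (1) via the elementary identity $\bL_n^k(u\circ T_n^k\cdot v)=u\cdot\bL_n^k(v)$ and the definition of $\P_n^k$, part (2) by integrating (1) against $\nu_{n+k}$ and using $(\P_n^k)^*\nu_{n+k}=\nu_n$, and part (3) by checking the defining properties of conditional expectation on sets of the form $(T_0^{n+k})^{-1}A_{n+k}$. The one place you add genuine care is in (3), where you restrict first to cylinder indicators and invoke a $\pi$-system/monotone-class argument; the paper instead plugs in $\mathbf{1}_{A_{n+k}}$ for a generic $A_{n+k}\in\mathcal F_{n+k}$ without comment, which implicitly uses the same extension — your bookkeeping makes that step airtight.
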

\begin{proof}
\begin{enumerate}
\item $\P_n^k(f\circ T_n^k\cdot g)=\frac{\bL_{n}^k( f\circ T_n^k\cdot g \cdot \bL_{0}^n(\1))}{\bL_0^{n+k}( \1)}=\frac{f\cdot \bL_n^k(g\cdot \bL_0^n(\1))}{\bL_0^{n+k}(\1)}=f\cdot \P_n^k(g).$
\item This is due to $(\P_n^k)^*\nu_{n+k}=\nu_n$ and (1).
\item Let $A=T_0^{-(n+k)}A_{n+k}$ for some $A_{n+k}\in \mathcal F_{n+k}$, then by (2)
\begin{align*}
\int_A g\circ T^n_0 ~d\nu_0&=\int g\cdot {\bf 1}_{A_{n+k}}\circ T_n^{k}~d\nu_{n}=\int \P_n^k(g)\cdot {\bf 1}_{A_{n+k}}~d\nu_{n+k}\\
&=\int_A (\P_n^k(g))\circ T_0^n~d\nu_0. 
\end{align*} 
\end{enumerate}
\end{proof}
\section{An almost sure invariance principle}
We shall prove an almost sure invariance principle for the non-stationary full shift $(X_n,\mathcal F_n, T_n, \nu_n)$ described above. The proof is based on the almost sure invariance principle for reverse martingale differences by Cuny and Merlev\`ede.

\begin{theorem}[{\cite[Theorem 2.3]{CunyMerlevede:2015}}]\label{thm:rvsmtg}
Let $(U_n)_{n\in\mathbb N}$ be a sequence of square integrable reverse martingale differences with respect to a non-increasing filtration $(\mathcal G_n)_{n\in\mathbb N}$. Assume that $\sigma_n^2:=\sum_{k=1}^n\mathbb E(U_k^2)\to\infty$ and that $\sup_n\mathbb E(U_n^2)<\infty$. Let $(a_n)_{n\in\mathbb N}$ be a non-decreasing sequence of positive numbers such that $(a_n/\sigma_n^2)_{n\in\mathbb N}$ is non-increasing and $(a_n/\sigma_n)_{n\in\mathbb N}$ is non-decreasing. Assume that
\begin{align*}
&\sum_{k=1}^n\left(\mathbb E(U_k^2|\mathcal G_{k+1})-\mathbb E(U^2_k)\right)=o(a_n) \qquad \mathbb P\text{-a.s.}\\
&\sum_{n\geqslant 1}a_n^{-\nu}\mathbb E\left(|U_n|^{2\nu}\right)<\infty \qquad \text{for some } 1\leqslant\nu\leqslant 2.\label{eq:sumvar}
\end{align*}
Then, enlarging our probability space if necessary, it is possible to find a sequence $(Z_k)_{k\geqslant 1}$ of independent centered Gaussian variables with $\mathbb E(Z_k^2)=\mathbb E(U_k^2)$ such that
$$\sup_{1\leqslant k\leqslant n}\left|\sum_{i=1}^k U_i-\sum_{i=1}^k Z_i\right|=o((a_n(|\log (\sigma_n^2/a_n)|+\log\log a_n))^{1/2})\qquad \mathbb P\text{-a.s.}$$
\end{theorem}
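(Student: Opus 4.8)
The plan is to establish this by a Gaussian strong-approximation argument of Skorokhod-embedding type, in the spirit of Strassen's proof of the martingale invariance principle and its quantitative refinements, the main work lying in adapting the construction to the \emph{reversed} filtration. Throughout write $S_n:=\sum_{k=1}^n U_k$. The reverse martingale property is what makes the increments $U_N,U_{N-1},\dots,U_1$ successively Skorokhod-embeddable into a Brownian motion, provided one processes them in \emph{decreasing} index order: at the step embedding $U_k$ one conditions on $\mathcal G_{k+1}$ together with the independent randomization already used, and the conditional law of $U_k$ is centered precisely because $\mathbb E(U_k\mid\mathcal G_{k+1})=0$. This produces, on an enlarged probability space, a Brownian motion $W$ and times $T_k-T_{k-1}=:\tau_k\ge 0$ with $S_k=W(T_k)$ and, by the Burkholder--Davis--Gundy inequalities for the embedding, $\mathbb E(\tau_k\mid\mathcal H_{k-1})=\mathbb E(U_k^2\mid\mathcal G_{k+1})$ and $\mathbb E(\tau_k^\nu\mid\mathcal H_{k-1})\ll\mathbb E(|U_k|^{2\nu}\mid\mathcal G_{k+1})$, where $(\mathcal H_k)$ is the filtration on the enlarged space generated by the construction.

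The core estimate to establish is that the random clock $T_n$ follows the deterministic clock $\sigma_n^2$ to within $o(a_n)$. One decomposes
\[
T_n-\sigma_n^2 \;=\; \sum_{k=1}^n\bigl(\tau_k-\mathbb E(\tau_k\mid\mathcal H_{k-1})\bigr)\;+\;\sum_{k=1}^n\bigl(\mathbb E(U_k^2\mid\mathcal G_{k+1})-\mathbb E(U_k^2)\bigr).
\]
The second sum is $o(a_n)$ a.s.\ by the first hypothesis of the theorem. The first sum is a sum of martingale differences whose $\nu$-th absolute moments are $\ll\mathbb E(|U_k|^{2\nu})$ by the embedding bound, so $\sum_k a_k^{-\nu}\mathbb E(|U_k|^{2\nu})<\infty$ and Chow's almost-sure convergence theorem for martingales, applied with the non-decreasing normalization $(a_n)$, gives that this sum is $o(a_n)$ a.s.\ as well. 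Hence $\sup_{k\le n}|T_k-\sigma_k^2|=o(a_n)$ almost surely.

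It then remains to replace the random clock by the deterministic one. Set $Z_k:=W(\sigma_k^2)-W(\sigma_{k-1}^2)$; these are independent centered Gaussians with $\mathbb E(Z_k^2)=\sigma_k^2-\sigma_{k-1}^2=\mathbb E(U_k^2)$, and $\sum_{i=1}^k Z_i=W(\sigma_k^2)$, so that
\[
\sup_{1\le k\le n}\Bigl|\sum_{i=1}^k U_i-\sum_{i=1}^k Z_i\Bigr|=\sup_{1\le k\le n}\bigl|W(T_k)-W(\sigma_k^2)\bigr|.
\]
Since $\sup_n\mathbb E(U_n^2)<\infty$ and $(a_n/\sigma_n^2)$ is non-increasing, all times $\sigma_k^2,T_k$ with $k\le n$ lie in an interval $[0,C\sigma_n^2]$; combining this with $\sup_{k\le n}|T_k-\sigma_k^2|=o(a_n)$, L\'evy's modulus of continuity for $W$ and the law of the iterated logarithm for Brownian increments bound the right-hand side by $o\bigl((a_n(|\log(\sigma_n^2/a_n)|+\log\log a_n))^{1/2}\bigr)$ a.s.: the term $|\log(\sigma_n^2/a_n)|$ is the logarithm of the number $\asymp\sigma_n^2/a_n$ of windows of length $a_n$ in $[0,\sigma_n^2]$ into which the displacement $T_k-\sigma_k^2$ may fall, and $\log\log a_n$ comes from a Borel--Cantelli argument along a geometric subsequence of $n$. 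The monotonicity assumptions on $(a_n/\sigma_n^2)$ and $(a_n/\sigma_n)$ are exactly what make these two error terms combine with the right constants.

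The main obstacle will be the first step. Unlike for a forward martingale, the embedding must run in decreasing index order, so the Brownian motion and the times $(T_k)$ built for horizon $N$ do not obviously extend to those for horizon $N+1$: inserting $U_{N+1}$ at the start of the embedding order reshuffles everything. One therefore has to either set up a single \emph{coherent} embedding valid for all $n$ at once (the delicate point being precisely this consistency, together with retaining the conditional moment control $\mathbb E(\tau_k^\nu\mid\mathcal H_{k-1})\ll\mathbb E(|U_k|^{2\nu}\mid\mathcal G_{k+1})$ under the reversed filtration), or run the argument over dyadic blocks — reducing each block to a finite reverse martingale, approximating it by its conditional Gaussian, and gluing the blocks using the independence of the $Z_k$. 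A secondary point is that the conditional variances $\mathbb E(U_k^2\mid\mathcal G_{k+1})$ appearing in the embedding must be traded for the deterministic variances $\mathbb E(U_k^2)$, which is exactly where the first hypothesis of the theorem enters.
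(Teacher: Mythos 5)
This statement is not proved in the paper: it is imported verbatim as a black box from Cuny and Merlev\`ede \cite{CunyMerlevede:2015} and then applied to the non-stationary shift in Theorem~\ref{theo:invpp}. So there is no ``paper's proof'' to compare your attempt against; what follows is an assessment of the attempt on its own merits.

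Your overall architecture is the right one for a Strassen-type strong approximation: embed $S_k$ into a Brownian motion at random times $T_k$, control $T_n-\sigma_n^2$ by decomposing it into a martingale fluctuation plus the conditional-variance drift $\sum_k(\mathbb E(U_k^2\mid\mathcal G_{k+1})-\mathbb E(U_k^2))$, then trade the random clock for the deterministic one via a modulus-of-continuity estimate for $W$. The treatment of the martingale part of the clock via Chow's theorem and $\mathbb E(\tau_k^\nu)\ll\mathbb E(|U_k|^{2\nu})$ is also correct in principle, and your reading of the two contributions to the error rate ($|\log(\sigma_n^2/a_n)|$ from covering the interval $[0,\sigma_n^2]$ by windows of width $a_n$, $\log\log a_n$ from Borel--Cantelli along a geometric subsequence) matches the classical Cs\"org\H{o}--R\'ev\'esz bookkeeping.

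The genuine gap is the one you yourself name and then set aside: the Skorokhod embedding for a \emph{reverse} martingale must be run in decreasing index order, so an embedding constructed for horizon $N$ does not extend to horizon $N+1$, and nothing in the proposal produces a single embedding coherent for all $n$. This is not a technicality that can be deferred; it is the technical core of the theorem, and the conditional moment identities $\mathbb E(\tau_k\mid\mathcal H_{k-1})=\mathbb E(U_k^2\mid\mathcal G_{k+1})$ and $\mathbb E(\tau_k^\nu\mid\mathcal H_{k-1})\ll\mathbb E(|U_k|^{2\nu}\mid\mathcal G_{k+1})$, on which the entire clock estimate rests, are only available once such a coherent construction exists. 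Naming two candidate fixes (a globally consistent embedding; a dyadic blocking and gluing) without carrying out either leaves the argument an outline rather than a proof. Cuny and Merlev\`ede's actual proof is organized precisely to circumvent this obstruction, working through a blocking scheme and a conditional Gaussian coupling rather than a naive term-by-term Skorokhod embedding; a complete proof along your lines would have to reproduce, in some form, the coherence argument that the blocking is designed to replace.
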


As an application, we obtain the following invariance principle for non-stationary shift spaces whose rate of approximation is directed towards a law of the iterated logarithm.

\begin{theorem}\label{theo:invpp} Let $\{f_n\in \mathcal{H}^{loc}_n\}_{n\in\mathbb N_0}$ be a sequence of functions with $\int f_n d\nu_n =0$ for all $n \in \N_0$,  
\[ \sup_n\int f_n^4d\nu_n<\infty \quad \text{and} \quad \sup_n \int (\mathcal{D}_r(f_n))^2 d\nu_n < \infty. \]
Furthermore, for $s_n^2 := \Var(\sum_{k=0}^{n-1} f_k\circ T_0^k)$, assume that 
$s_n\to \infty$ and $\sum_n s_n^{-4}<\infty$. Then, enlarging our probability space if necessary, there exists a sequence $(Z_n)$ of independent centered Gaussian random variables such that 
\begin{gather*}
\sup_n\left|\sqrt{\textstyle \sum_{k=0}^{n-1} \Var(Z_k)}-s_n\right|<\infty,\\
\sup_{0\leqslant k \leqslant n-1} \left| \textstyle\sum_{i=0}^k f_i\circ T_0^i - \sum_{i=0}^k Z_i \right| = o(\sqrt{s^2_n \log \log s^2_n}) \hbox{ a.s.}.
\end{gather*}
\end{theorem}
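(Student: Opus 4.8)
The plan is to reduce Theorem \ref{theo:invpp} to the reverse-martingale invariance principle of Cuny \& Merlev\`ede (Theorem \ref{thm:rvsmtg}) via the standard reverse-martingale decomposition, using the contraction estimates of Theorem \ref{theo:exponential decay for L1} as the replacement for a spectral gap. Concretely, I would work on the probability space $(X_0,\mathcal F_0,\nu_0)$ with the non-increasing filtration $\mathcal G_{k} := (T_0^{k})^{-1}\mathcal F_{k}$, and for each $k$ set $S := \sum_{i=0}^{n-1} f_i\circ T_0^i$. The first step is to write $f_k\circ T_0^k = U_k + g_k\circ T_0^k - g_{k+1}\circ T_0^{k+1}$, where the ``corrector'' is $g_k := \sum_{j\geqslant k}\P_k^{j-k}(f_j)$ on $X_k$. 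The series converges in $L^1(\nu_k)$ and uniformly by Theorem \ref{theo:exponential decay for L1}(2) together with $\int f_j d\nu_j = 0$, which gives $\|\P_k^{j-k}(f_j)\|_\infty \leqslant C s^{j-k}(\|f_j\|_{j,1}+\|\mathcal D_r(f_j)\|_{j,1})$, and the moment hypotheses bound the bracket uniformly in $j$; hence $\sup_k\|g_k\|_\infty < \infty$. Using Proposition parts (1)--(3) one checks the cocycle identity $\P_k^1(f_k + g_{k+1}) = g_k$, which is exactly the statement that $U_k := (f_k + g_{k+1}\circ T_{k+1} - g_k\circ T_k)\circ T_0^k$ satisfies $\mathbb E(U_k\mid \mathcal G_{k+1}) = 0$, i.e.\ $(U_k)$ is a sequence of reverse-martingale differences for $(\mathcal G_k)$. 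Boundedness of $(g_k)$ and $\sup_k\int f_k^4 d\nu_k < \infty$ give $\sup_k\mathbb E(U_k^4) < \infty$, so in particular $\sup_k\mathbb E(U_k^2)<\infty$ and the moment condition $\sum_n a_n^{-2}\mathbb E(U_n^4) < \infty$ holds with $\nu = 2$ as soon as $\sum_n a_n^{-2} < \infty$; the natural choice $a_n := s_n^2$ (so $a_n/\sigma_n^2$ is essentially constant and $a_n/\sigma_n$ is non-decreasing once we know $\sigma_n^2 \asymp s_n^2$) then matches the hypothesis $\sum_n s_n^{-4} < \infty$.

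The second step is to control the two variances. Because the telescoping correction $g_0\circ T_0^0 - g_n\circ T_0^n$ is uniformly bounded, $s_n^2 = \Var(S)$ and $\sigma_n^2 := \sum_{k=1}^{n}\mathbb E(U_k^2)$ (reindexed to start at $0$) differ by $O(\sqrt{s_n^2}) + O(1)$: indeed $\Var(\sum_{i<n} U_i) = \Var(S) + O(\|g_0\|_\infty^2) + 2\,\mathrm{Cov}(S, g_0 - g_n\circ T_0^n)$ and Cauchy--Schwarz bounds the covariance by $2 s_n \cdot 2\sup_k\|g_k\|_\infty$. In particular $\sigma_n^2 \asymp s_n^2 \to \infty$, $\sqrt{\sigma_n^2} - s_n$ is bounded, and the error term in Theorem \ref{thm:rvsmtg} is $o((a_n(|\log 1| + \log\log a_n))^{1/2}) = o(\sqrt{s_n^2\log\log s_n^2})$, which is the desired rate. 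The Gaussian sequence $(Z_k)$ produced by Theorem \ref{thm:rvsmtg} has $\mathbb E(Z_k^2) = \mathbb E(U_k^2)$, so $\sqrt{\sum_{k<n}\Var(Z_k)} = \sqrt{\sigma_n^2}$ stays within a bounded distance of $s_n$, giving the first displayed conclusion; and $\sup_{k<n}|\sum_{i\leqslant k} U_i - \sum_{i\leqslant k} Z_i| = o(\sqrt{s_n^2\log\log s_n^2})$ together with the uniform bound $\sup_{k<n}|\sum_{i\leqslant k} f_i\circ T_0^i - \sum_{i\leqslant k} U_i| = |g_0\circ T_0^0 - g_{k+1}\circ T_0^{k+1}| \leqslant 2\sup\|g_j\|_\infty$ yields the second displayed conclusion by the triangle inequality.

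The remaining hypothesis of Theorem \ref{thm:rvsmtg} that needs genuine work is the almost-sure variance estimate $\sum_{k=1}^{n}(\mathbb E(U_k^2\mid\mathcal G_{k+1}) - \mathbb E(U_k^2)) = o(a_n)$ with $a_n = s_n^2$. This is where the $L^1$-version of the spectral gap (Theorem \ref{theo:exponential decay for L1}) is essential: writing $h_k := (f_k + g_{k+1}\circ T_{k+1} - g_k\circ T_k)^2$ on $X_k$, one has $\mathbb E(U_k^2\mid\mathcal G_{k+1}) = (\P_k^1(h_k))\circ T_0^{k+1}$ by Proposition (3), and $h_k \in \mathcal H^{\mathrm{loc}}_k$ with $\|h_k\|_{k,1}$ and $\|\mathcal D_r(h_k)\|_{k,1}$ uniformly bounded — here the hypothesis $\sup_k\int f_k^4 d\nu_k < \infty$ controls $\|h_k\|_{k,1}$ via $\int f_k^2 d\nu_k \leqslant (\int f_k^4)^{1/2}$ and the boundedness of $g_k$, while $\sup_k\int(\mathcal D_r(f_k))^2 d\nu_k < \infty$ together with $D_r(g_k)$ being uniformly bounded (again from Theorem \ref{theo:exponential decay for L1}(1) applied term-by-term in the series defining $g_k$) controls $\|\mathcal D_r(h_k)\|_{k,1}$. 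I would then estimate $\Var(\sum_{k<n}(\mathbb E(U_k^2\mid\mathcal G_{k+1}) - \mathbb E(U_k^2)))$ by expanding and using $\mathrm{Cov}((\P_j^1 h_j)\circ T_0^{j+1} , (\P_k^1 h_k)\circ T_0^{k+1})$ with $j < k$; conditioning on $\mathcal G_{k+1}$ and applying Theorem \ref{theo:exponential decay for L1}(2) gives an exponential decay $\ll s^{k-j}$ in this covariance (after centering), so the double sum is $O(n) = o(s_n^4) = o(a_n^2)$ since $s_n^4/n \to\infty$ is forced by $\sum s_n^{-4} < \infty$. A Gál--Koksma / Borel--Cantelli argument along a lacunary subsequence, combined with the monotonicity of partial sums up to the controlled fluctuation, then upgrades this $L^2$-bound to the required almost-sure $o(a_n)$ statement. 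The main obstacle, and the step I expect to absorb most of the technical effort, is precisely this verification of the a.s.\ variance condition — establishing uniform $\mathcal H^{\mathrm{loc}}$-bounds on the squared cocycle $h_k$ and then extracting an almost-sure rate from the exponential decay of correlations without a stationary reference measure.
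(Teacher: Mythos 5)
Your overall strategy is the same as the paper's: reduce to the Cuny--Merlev\`ede reverse-martingale ASIP via a cohomological (Gordin-type) decomposition, using the $L^1$-contraction of Theorem \ref{theo:exponential decay for L1} as a substitute for a spectral gap, and then patch the $L^2$ bound on the variance condition into an almost-sure bound by Borel--Cantelli along a subsequence. However, the central object in your decomposition is ill-defined, and this is a genuine gap rather than a notational slip. You write the corrector as $g_k := \sum_{j\geqslant k}\P_k^{j-k}(f_j)$. But $\P_k^{j-k}$ maps functions on $X_k$ to functions on $X_{k+(j-k)} = X_j$, so it cannot be applied to $f_j$, which already lives on $X_j$. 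The same type error infects your claimed cocycle identity $\P_k^1(f_k + g_{k+1}) = g_k$: $\P_k^1$ takes input on $X_k$ (but $g_{k+1}$ lives on $X_{k+1}$) and produces output on $X_{k+1}$ (so cannot equal $g_k$ on $X_k$). More importantly, the \emph{direction} of the sum is wrong. In a reverse-martingale decomposition with $\mathcal G_k = (T_0^k)^{-1}\mathcal F_k$ decreasing, the corrector must accumulate \emph{past} contributions pushed forward, not future contributions pulled back: the correct choice is $h_k := \sum_{j=0}^{k-1}\P_j^{k-j}(f_j)$ (each summand is $\P_j^{k-j}$ applied to $f_j\in\mathcal H^{\mathrm{loc}}_j$, landing correctly in $\mathcal H^r_k$), defined by the recursion $h_{k+1} = \P_k^1(f_k + h_k)$ with $h_0 = 0$. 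With this choice, $u_k := f_k + h_k - h_{k+1}\circ T_k$ and $U_k := u_k\circ T_0^k$ satisfy $\mathbb E(U_k\mid\mathcal G_{k+1}) = (\P_k^1(f_k+h_k) - h_{k+1})\circ T_0^{k+1} = 0$; the identity you wrote is both mistyped and reversed.

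Two smaller points. First, your assertion that $\sum s_n^{-4}<\infty$ forces $s_n^4/n\to\infty$ is false as stated (it only gives $\limsup s_n^4/n=\infty$), so your claim that the covariance double sum being $O(n)$ is automatically $o(a_n^2)$ is unjustified. The paper does not need this: it proves $\mathbb E(F_n-1)^2\ll\sigma_n^{-2}$, passes to a subsequence $m_n$ with $\sigma_{m_n}^2\asymp n^2$ so that Borel--Cantelli applies, and interpolates between $m_{\tilde n}$ and $m_{\tilde n+1}$ using the monotonicity of $\sigma_n^2 F_n$ — a device you gesture at (``Gál--Koksma along a lacunary subsequence'') but should make precise, since it is what actually rescues the argument. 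Second, your Cauchy--Schwarz estimate for $|\sigma_n - s_n|$ works, but the reverse triangle inequality gives $|\sigma_n - s_n|\leqslant\|h_n\circ T_0^n\|_2$ directly and more cleanly; the crucial fact, which you do have, is that $\|h_n\|_\infty$ and $D(h_n)$ are uniformly bounded by $C\sum_{j<n}s^{n-j}(\|f_j\|_{j,1}+\|\mathcal D_r(f_j)\|_{j,1})$.

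Once the corrector is fixed, the rest of your outline (uniform boundedness of $\|u_k^2\|_{k,1}$ and $\|\mathcal D_r(u_k^2)\|_{k,1}$, the $L^2$ estimate for the variance condition via exponential decay of correlations, the choice $a_n=\sigma_n^2$ and $\nu=2$, and the telescoping comparison between $\sum U_i$ and $\sum f_i\circ T_0^i$) matches the paper's proof in all essentials.
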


Before giving the proof of the theorem, we would like to remark that, in order to obtain a central limit theorem from the almost sure invariance principle, one would have to improve the order of the approximation (see, for example, \cite{HaydnNicolTorok:2017} for BV-observables on the unit interval).
This would require more assumptions on $(f_n)$ which are probably satisfied in the setting of sequentially restricted partial quotients. However, in this paper, we only deal with almost sure results, partly because this is the natural approach for considering continued fractions with restricted quotients with respect to  
the Hausdorff measure, which is the reference measure in our setting. Namely,  $\nu_0$ is absolutely continuous with respect to this measure (see Proposition \ref{prop:HD-dimension}) and therefore, an almost sure result with respect to $\nu_0$ holds for a set of positive Hausdorff measure.

\begin{proof}
Denote $\mathcal G_n:=(T_0^n)^{-1}\mathcal F_n$ for $n\in\mathbb N$ and $\mathcal G_0:=\mathcal F_0$, then $\mathcal G_n$ is a non-increasing filtration. Let $h_0:=0$, and define $h_n\in \mathcal{H}^r_n$ recursively by $h_{n+1}=\P_n^1f_n+\P_n^1h_n$. Then $h_n=\sum_{k=0}^{n-1}\P_k^{n-k}f_k$. It follows from Theorem \ref{theo:exponential decay for L1} that for some constants $C\geqslant 1$ and $s\in(0,1)$ 
$$\|h_n\|_\infty\leqslant \sum_{k=0}^{n-1}\|\P^{n-k}_k f_k\|_\infty\leqslant C\sum_{k=0}^{n-1}s^{n-k}(\|f_k\|_{k,1}+\|\mathcal{D}_r(f_k)\|_{k,1})
$$
and
$$D(h_n)\leqslant \sum_{k=0}^{n-1}D(\P^{n-k}_k f_k)\leqslant C \sum_{k=0}^{n-1}s^{n-k}(\|f_k\|_{k,1}+\|\mathcal{D}_r(f_k)\|_{k,1}).
$$
They are both uniformly bounded by the assumption.
Let $$u_n:=f_n+h_n-h_{n+1}\circ T_n^{1}\in \mathcal H^r_n, \quad U_n:=u_n\circ T_0^n.$$ Clearly, $U_n$ is $\mathcal G_n$-measurable and square integrable. Furthermore, because 
\begin{align*}\mathbb E(U_n|\mathcal G_{n+1})&=\mathbb E(f_n\circ T_0^n|\mathcal G_{n+1})+\mathbb E(h_n\circ T_0^n|\mathcal G_{n+1})-h_{n+1}\circ T_0^{n+1}\\&=(\P_n^1 f_n)\circ T_0^{n+1}+(\P_n^1 h_n)\circ T_0^{n+1}-h_{n+1}\circ T_0^{n+1}=0,\end{align*} 
$(U_n)_{n\in\mathbb N_0}$ is a sequence of square integrable reverse martingale differences. Let $$\sigma_n^2:=\sum_{k=0}^{n-1}\int u_k^2d\nu_k=\sum_{k=0}^{n-1}\mathbb E(U_k^2)=\mathbb E\left(\left(\sum_{k=0}^{n-1}U_k\right)^2\right).$$ We check the conditions of Theorem \ref{thm:rvsmtg} with $a_n=\sigma_n^2$. 
First we show $\sigma_n^2\to\infty$ and $\sup_n\mathbb E(U_n^2)<\infty$. It follows from 
$$|\sigma_n-s_n|=\left|\left\|\sum_{k=0}^{n-1}U_k\right\|_2-\left\|\sum_{k=0}^{n-1}f_k\circ T_0^k\right\|_2\right|\leqslant \left\|\sum_{k=0}^{n-1}U_k-\sum_{k=0}^{n-1}f_k\circ T_0^k\right\|_2=\|h_n\circ T_0^n\|_2 $$
that $|\sigma_n-s_n|$ is uniformly bounded. Meanwhile $s_n^2\to\infty$ implies that $\sigma_n^2\to\infty$. 
The uniform bound for $\mathbb E(U_n^2)$ follows from the uniform bounds for $\|f_n\|_{n,2}$ and $\|h_n\|_\infty$.

Next we show that $$\sum_{k=0}^{n-1}\left(\mathbb E(U_k^2|\mathcal G_{k+1})-\mathbb E(U^2_k)\right)=o(\sigma^2_n) \qquad \mathbb \nu_0\text{-a.s.}$$
We use ideas from the proof of \cite[Theorem 4.1]{ConzeRaugi:2007}.
Let $\tilde u_k:=u_k^2-\int u_k^2 d\nu_{k}$ and $$F_n:=\sigma_n^{-2}\sum_{k=0}^{n-1}\mathbb E(U_k^2|\mathcal G_{k+1}).$$ 
Then $$\sum_{k=0}^{n-1}\mathbb E(U_k^2|\mathcal G_{k+1})-\mathbb E(U_k^2)=\sum_{k=0}^{n-1}\P_k^1 \tilde u_{k}\circ T_0^{k+1}=\sigma_n^2(F_n-1).$$ It remains to show that $F_n-1\to 0$ a.s.
Note that for some constants $C>1$ and $s\in (0,1)$, we have 
\begin{align*}
\mathbb E\left(\sum_{k=0}^{n-1} \P_k^1\tilde u_k\circ T_0^{k+1}\right)^2
&\leqslant 2 \sum_{0\leqslant k\leqslant l\leqslant n-1}\mathbb E (\P_k^1\tilde u_k\circ T_0^{k+1}\cdot \P_l^1\tilde u_l\circ T_0^{l+1})\\
&=2\sum_{0\leqslant k\leqslant l\leqslant n-1}\int(\P_k^{l-k+1}\tilde u_k\cdot \P_l^1\tilde u_l)d\nu_{l+1}\\
&\leqslant C \sum_{0\leqslant k\leqslant l\leqslant n-1}s^{l-k+1} (\| \tilde u_k\|_{k,1}+\|\mathcal D_r(\tilde u_k)\|_{k,1})\cdot \|\P_l^1\tilde u_l\|_{l+1,1} \\
&\leqslant 2C\sum_{0\leqslant k\leqslant l\leqslant n-1}s^{l-k+1}(\mathbb E(U_k^2)+\|\mathcal D_r(u_k^2)\|_{k,1})\cdot \mathbb E (U_l^2).
\end{align*}
In this upper bound $\mathbb E(U_k^2)$ is uniformly bounded, and we will show the same for $\|\mathcal D_r(u_k^2)\|_{k,1}$. Write out
$$u_k^2=f_k^2+h_k^2+h^2_{k+1}\circ T_k^1+2f_kh_k-2f_k\cdot(h_{k+1}\circ T_k^1)-2h_k\cdot(h_{k+1}\circ T_k^1).$$
For any $x,y\in[w], w\in\mathcal W_k$, choose $z\in[w]$ such that $|f_k(z)|\cdot \nu_k([w])\leqslant 2\|f_k{\bf 1}_{[w]}\|_{k,1}$, then letting $ D_{[w]}(f) := \sup \{ |f(x)-f(y)|/d(x,y) : x,y \in [w]\}$
\begin{align*}
|f_k(x)+f_k(y)|\cdot \nu_k([w])&\leqslant (|f_k(x)-f_k(z)|+|f_k(y)-f_k(z)|+2|f_k(z)|)\cdot \nu_k([w])\\
&\leqslant 2r D_{[w]}(f_k)\nu_k([w])+4\|f_k{\bf 1}_{[w]}\|_{k,1}.
\end{align*}
Hence \begin{align*}
\| \mathcal D_r(f_k^2)\|_{k,1} &=\sum_{w\in \mathcal W_k}\nu_k([w]) D_{[w]}(f_k^2)
=\sum_{w\in \mathcal W_k}\nu_k([w]) \sup_{x,y\in[w]} \frac{|f_k^2(x)-f_k^2(y)|}{d(x,y)}\\
&=\sum_{w\in \mathcal W_k}\sup_{x,y\in[w]} \frac{|f_k(x)-f_k(y)|}{d(x,y)}|f_k(x)+f_k(y)|\cdot \nu_k([w])\\
&\leqslant  \sum_{w\in \mathcal W_k} D_{[w]}(f_k)\cdot \left(2r D_{[w]}(f_k)\nu_k([w])+4\|f_k{\bf 1}_{[w]}\|_{k,1}\right)\\
&\leqslant 2 \int (\mathcal D_r(f_k))^2 d\nu_k + 4 \int (\mathcal D_{r}(f_k)) |f_k| d \nu_k \\
&\leqslant 2 \|\mathcal D_r(f_k) \|_{k,2}^2 + 4 \|\mathcal D_r(f_k) \|_{k,2} \| f_k \|_{k,2}
\end{align*} is uniformly bounded. A similar bound can be obtained for $\|\mathcal D_r(h_k^2)\|_{k,1}$.
 The remaining terms in $\|\mathcal D_r(u_k^2)\|_{k,1}$ are also uniformly bounded, as
$$\|\mathcal D_r(h_{k+1}^2\circ T_k^1)\|_{k,1}\leqslant r^{-1}D(h_{k+1}^2)\leqslant 2r^{-1} D(h_{k+1})\|h_{k+1}\|_{\infty},$$
$$\|\mathcal D_r(f_kh_k)\|_{k,1}\leqslant \| \mathcal D_r(f_k)\|_{k,1}\|h_k\|_\infty+2D_r(h_k)\|f_k\|_{k,1}$$
and  similar estimates for $f_k\cdot (h_{k+1}\circ T_k^1)$ and $h_k\cdot (h_{k+1}\circ T_k^1)$ hold. Thus we have shown that $\|\mathcal D_r(u_k^2)\|_{k,1}$ is uniformly bounded. These uniform bounds imply that there exists a constant $C_1>0$ such that 
$$\mathbb E(F_n-1)^2=\sigma_n^{-4}\mathbb E\left(\sum_{k=0}^{n-1} \P_k^1\tilde u_k\circ T_0^{k+1}\right)^2\leqslant C_1 \sigma_n^{-4}\sum_{0\leqslant l\leqslant n-1}\mathbb E (U_l^2)= C_1\sigma_n^{-2}.$$
As $\sigma_n\to\infty$, $\mathbb E(F_n-1)^2\to 0$. We need to show that this convergence is an almost sure convergence. For $ \tilde C := \sup_k \mathbb E (U^2_k)$, let $m_n:=\inf \{t: \sigma_t^2\geqslant n^2\tilde C\}.$
Then $m_n<\infty$ and $$n^2\tilde C\leqslant \sigma_{m_n}^2\leqslant (n^2+1)\tilde C.$$
Since $\sum_n \mathbb E(F_{m_n}-1)^2\leqslant C_1\sum_n\sigma_{m_n}^{-2}<\infty$, $F_{m_n}\to 1$ a.s. by the Borel-Cantelli lemma.  Choose $\tilde n=\tilde n(n)$  such that $m_{\tilde n}\leqslant n\leqslant m_{\tilde n+1}$, then 
$$ F_{m_{\tilde n}}\frac{\tilde n^2}{(\tilde n+1)^2+1}\leqslant F_{m_{\tilde n}}\frac{\sigma^2_{m_{\tilde n}}}{\sigma^2_{m_{\tilde n+1}}}\leqslant F_n\leqslant  F_{m_{\tilde n+1}}\frac{\sigma^2_{m_{\tilde n+1}}}{\sigma_{m_{\tilde n}}^2}\leqslant F_{m_{\tilde n+1}}\frac{(\tilde n+1)^2+1}{\tilde n^2}.$$
Hence, $F_n\to 1$ a.s.

Moreover, $\sum_{n}\sigma_n^{-4}\mathbb E(|U_n|^{4})<\infty$ because $\sum_n s_n^{-4}$ and $\sup_n\|f_n^4\|_{n,1}$ are finite and because $\|U_n-f_n\circ T_0^n\|_\infty$ and $|\sigma_n-s_n|$ are both uniformly bounded.

Now we can use Theorem \ref{thm:rvsmtg} to find a sequence of independent centered Gaussian variables $\{Z_k\}$ with $\mathbb E(Z_k^2)=\mathbb E(U_k^2)$ such that 
$$\sup_{0\leqslant k\leqslant n-1}\left|\sum_{i=0}^k U_i-\sum_{i=0}^k Z_i\right|=o\left(\sqrt{\sigma^2_n \log\log \sigma^2_n}\right)\qquad \text{a.s.}$$
Since $|\sum_{i=0}^{k} f_i\circ T_0^i-\sum_{i=0}^{k} U_i|$ and $|\sigma_n-s_n|$ are both uniformly bounded, the statement of the theorem immediately follows.
  \end{proof}

\section{The law of the iterated logarithm}
We  apply the results of the proceeding section to continued fractions with restricted entries. In order to do so, recall that each irrational number $x \in [0,1]$ has a uniquely determined continued fraction $(x_0, x_1, x_2 \ldots)$ with $x_n \in \N$ such that 
\[ x = \llbracket x_0, x_1, \ldots \rrbracket := \frac{1}{ x_0 + \frac{1}{x_1 + \cdots}}. \]
For any word $w$, denote by $\llbracket w \rrbracket\subset [0,1]$ the subset corresponding to the cylinder $[w]$.
We now consider the following subsets of $[0,1]$ with restricted entries. That is, 
for a sequence of natural numbers $(\alpha_n : n \in \N_0)$ with $\lim_{n\to \infty} \alpha_n  = \infty$, 
we consider the set $X_\alpha := \{ x : x_n \geqslant \alpha_n \; \forall n \in \N_0 \}$ (where $\alpha=\llbracket \alpha_0,\alpha_1,\ldots\rrbracket$) and ask for a law of large numbers and the maximal growth rate of $x_n$ almost surely. In order to obtain these 
from the above results, we first analyze the geometric potential which, in particular, allows to determine the Hausdorff dimension and the measure class of $\nu_0$

\subsection{The geometric potential}
Let $(X_n)$ be the non-stationary shift space with $n$-th alphabet $\cW_n:= \{ l \in \N : l \geqslant \alpha_n\}$ and equipped with the potential functions 
\[\varphi_n:X_n \to (0,1), \quad (x_n,x_{n+1}, \ldots ) \mapsto  (\llbracket x_n,x_{n+1}, \ldots \rrbracket)^{2 \delta_n}, \]
for a sequence $(\delta_n)$ in $(0,1]$ to be determined below. Note that the $\varphi_n$ are geometric potentials as $\varphi_n = \varphi^{\delta_n}$ where $\varphi:=|1/S'|$ and $S$ referring to the continued fraction map 
$S(x) = 1/x \mod 1$. For $w = (w_n, \ldots, w_{n+k-1}) \in \cW_n^k$, set 
\[\Phi_w  := \prod_{l=0}^{k-1} \varphi_{n+l} \circ \tau_{(w_{n+l}, \ldots, w_{n+k-1})}.\]
Recall that $\tau_{(w_{n+l}, \ldots, w_{n+k-1})}: X_{n+k}\to X_{n+l}$ is the inverse of the map $T_{n+l}^{k-l}|_{[(w_{n+l}, \ldots, w_{n+k-1})]}$. As it is well known, the geometric potential $|1/S'|$ of the continued fraction map satisfies the Gibbs-Markov property. As $\delta_n \leqslant 1$, it follows from this that there exist $C\geqslant 1$ and $r \in (0,1)$ such that for all $k,n \in \N$,  $u \in \cW_0^n$, $w \in \cW_n^k$ and  and $x,y \in [w]$,  we have $|\log (\Phi_u(x) / \Phi_u(y))| \leqslant Cr^k$. 
In order to  simplify the notation, set
\[ \gamma_n := 2 \delta_n -1 \hbox{ and }
 \epsilon_n := (\alpha_{n} \alpha_{n-1})^{-1}.\]
\begin{lemma} \label{Lemma:Estimate_L(1)}
If $\gamma_n$ is given by $\gamma_n \alpha_n^{\gamma_n} =1$, then $\| \bL_0^n(\1)\|_\infty < \infty$. If 
 $\lim_n \alpha_n = \infty$, then  $|\bL_0^n(\1)(x)/\bL_0^n(\1)(y)-1| \ll \epsilon_n$ for all $x, y\in X_n$.  
\end{lemma}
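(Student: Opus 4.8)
\emph{Setup.} The plan is to work directly with the explicit Gibbs product underlying the continued fraction structure, rather than invoking bounded distortion as a black box. For $w=(w_0,\dots,w_{n-1})\in\cW_0^n$ and $x=(x_n,x_{n+1},\dots)\in X_n$, I set $y_n:=\llbracket x\rrbracket$ and, recursively, $y_l:=1/(w_l+y_{l+1})$ for $l=n-1,\dots,0$, so that $y_l=\llbracket w_l,\dots,w_{n-1},x_n,x_{n+1},\dots\rrbracket$. Since $\varphi_l(z)=\llbracket z\rrbracket^{2\delta_l}$, unwinding the definition of $\varphi_w$ gives
\[
\varphi_w(x)=\prod_{l=0}^{n-1}\varphi_l\bigl(\tau_{(w_l,\dots,w_{n-1})}(x)\bigr)=\prod_{l=0}^{n-1}(w_l+y_{l+1})^{-2\delta_l},
\]
and hence $\bL_0^n(\1)(x)=\sum_{w\in\cW_0^n}\varphi_w(x)$. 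The single structural input, needed only for the second statement, is that $x\in X_n$ forces $x_n\geqslant\alpha_n$, so that $y_n=\llbracket x\rrbracket\in(0,1/\alpha_n)$.

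\emph{First statement.} Here I would just use $w_l+y_{l+1}\geqslant w_l$, so that $\varphi_w(x)\leqslant\prod_l w_l^{-2\delta_l}$ and the sum over $w$ factorises into $\|\bL_0^n(\1)\|_\infty\leqslant\prod_{l=0}^{n-1}\sum_{m\geqslant\alpha_l}m^{-2\delta_l}$. Since $2\delta_l=1+\gamma_l$ and the equation $\gamma_l\alpha_l^{\gamma_l}=1$ forces $\gamma_l\in(0,1]$, each factor is a convergent series; an integral comparison together with $\alpha_l^{-\gamma_l}=\gamma_l$ gives $\sum_{m\geqslant\alpha_l}m^{-2\delta_l}\leqslant 1+\gamma_l/\alpha_l$, whence $\|\bL_0^n(\1)\|_\infty\leqslant\prod_{l=0}^{n-1}(1+\gamma_l/\alpha_l)<\infty$.

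\emph{Second statement.} Fixing $x,y\in X_n$ and letting $y_l,z_l$ be the corresponding tails, one has $|y_n-z_n|=|\llbracket x\rrbracket-\llbracket y\rrbracket|<1/\alpha_n$. From the identity $y_l-z_l=(z_{l+1}-y_{l+1})/[(w_l+y_{l+1})(w_l+z_{l+1})]$ and $w_j\geqslant\alpha_j$ one obtains, by iteration, $|y_{l+1}-z_{l+1}|\leqslant\alpha_n^{-1}\prod_{j=l+1}^{n-1}\alpha_j^{-2}$, and therefore
\[
\left|\log\frac{\varphi_w(x)}{\varphi_w(y)}\right|\leqslant\sum_{l=0}^{n-1}2\delta_l\left|\log\Bigl(1+\frac{z_{l+1}-y_{l+1}}{w_l+y_{l+1}}\Bigr)\right|\ll\frac1{\alpha_n}\sum_{l=0}^{n-1}\frac1{\alpha_l}\prod_{j=l+1}^{n-1}\alpha_j^{-2}.
\]
Denoting the last sum by $A_n$, one has $A_n=\alpha_{n-1}^{-1}+\alpha_{n-1}^{-2}A_{n-1}$, so $\alpha_n\to\infty$ makes $(A_n)$ bounded and in fact $A_n\ll 1/\alpha_{n-1}$, the term $l=n-1$ dominating. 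Hence $|\log(\varphi_w(x)/\varphi_w(y))|\ll(\alpha_n\alpha_{n-1})^{-1}=\epsilon_n$ uniformly in $w\in\cW_0^n$; summing the resulting two-sided bound $e^{-c\epsilon_n}\varphi_w(y)\leqslant\varphi_w(x)\leqslant e^{c\epsilon_n}\varphi_w(y)$ over $w$ yields $e^{-c\epsilon_n}\leqslant\bL_0^n(\1)(x)/\bL_0^n(\1)(y)\leqslant e^{c\epsilon_n}$, that is $|\bL_0^n(\1)(x)/\bL_0^n(\1)(y)-1|\ll\epsilon_n$. For the finitely many $n$ outside this asymptotic regime (where $\alpha_n$ is still small), the crude bound $\varphi_w(x)/\varphi_w(y)\in[4^{-n},4^n]$ shows the left side is bounded, which is absorbed into the constant implicit in $\ll$.

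\emph{Main obstacle.} I do not foresee a real obstacle: once the product formula is in hand, everything reduces to elementary estimates. The only point requiring care — and the reason one gets $\epsilon_n=(\alpha_n\alpha_{n-1})^{-1}$ rather than the weaker $\alpha_{n-1}^{-1}$ that plain bounded distortion would give — is to track simultaneously the factor $\alpha_n^{-1}$ arising from $\llbracket x\rrbracket<1/\alpha_n$ (the restriction at level $n$) and the factor $\alpha_{n-1}^{-1}$ coming from the final inverse branch, checking that the branches $l<n-1$ contribute only lower-order terms.
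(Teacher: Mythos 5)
Your proof is correct and follows essentially the same route as the paper: for the first claim, the product factorisation plus an integral comparison using $\gamma_l\alpha_l^{\gamma_l}=1$; for the second, the recursive contraction bound $|y_{l+1}-z_{l+1}|\leqslant\alpha_n^{-1}\prod_{j>l}\alpha_j^{-2}$ combined with a Lipschitz estimate $\ll 1/\alpha_l$ for each $\log\varphi_l$-factor, the $l=n-1$ term dominating. The paper phrases the second step via derivatives of $\log\varphi_l\circ\tau_w$ and diameters of $X_n$, but the content is identical.
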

\begin{remark}\label{remark:alpha vs gamma} Before giving the proof, we relate  $\alpha_n$ and $\gamma_n$. Recall that Lambert's $W$-function is defined as the inverse of $x \mapsto xe^x$ and observe that the inverse of $x \mapsto x^x$ for $x>0$ can be expressed in terms of the positive branch of $W$. That is, the inverse is equal to $x \mapsto e^{W(\log x)}$ and, as $\alpha_n = (1/\gamma_n)^{1/\gamma_n}$, 
$ \gamma_n = e^{-W(\log \alpha_n)}$. 
As $W(x) = \log x - \log\log x + O(\log\log x/\log x)$ (see (4.19) in \cite{CorlessGonnetHare:1996}) it follows from $\alpha_n \to \infty $ that
\begin{equation}
\label{eq:gamma_n}
\gamma_n \sim \frac{\log \log \alpha_n}{\log \alpha_n}.
\end{equation}
In particular, $\lim_n \gamma_n=0$ and $\lim_n \delta_n=1/2$. We also have $-\log\gamma_n\sim \log\log\alpha_n$.
\end{remark}
\begin{proof} 
The proof of the first part relies on the  observation that the image of $[k]$ under $(w) \mapsto \llbracket(w) \rrbracket$ is contained in $(1/(k+1), 1/k)$. Therefore, $\varphi_n \in (1/(k+1)^{2\delta_n}, 1/k^{2\delta_n})$ and, for $x \in X_n$,
\begin{align*}
\bL_0^n(\1)(x) & \leqslant 
\sum_{l_0 \geqslant \alpha_0, \ldots , l_{n-1} \geqslant \alpha_{n-1}} \frac{1}{l_0^{2\delta_0}} \cdots\frac{1}{l_{n-1}^{2\delta_{n-1}}} = 
 \prod_{k=0}^{n-1} \sum_{l= \alpha_k}^\infty \frac{1}{l^{2\delta_k}}\\
 & \leqslant   \prod_{k=0}^{n-1}  \frac{1}{2\delta_k -1} (\alpha_k -1)^{1-2\delta_k} 
    =   \prod_{k=0}^{n-1}  \frac{1}{\gamma_k} (\alpha_k -1)^{-\gamma_k}  =:b_+
\end{align*} 
The proof of the second assertion relies on a simple distortion estimate.  In order to do so, observe that $|(S^k)'(y)|\geqslant \prod_{j=1}^{k}\alpha_{l+j}^{2}$ for all $y \in X_{l+1}$. Hence, for $v \in \cW_{l+1}^k$ and ${z}, \tilde{z} \in X_{l+k+1}$, 
\[ |\tau_v(z) - \tau_v( \tilde{z}) | \leqslant  \diam(X_{l+k+1})  \prod_{j=1}^{k}\frac{1}{\alpha_{l+j}^{2}} = \frac{1}{\alpha_{l+k+1}}   \prod_{j=1}^{k}\frac{1}{\alpha_{l+j}^{2}}.\] 
Furthermore, for $w \in \cW_{l}$ and $x = \tau_w(y) \in X_l$, 
\[ \log \varphi_l(x) = \log |\tau_w'(y)|^{\delta_l} =  - 2 \delta_l \log(y + w),
 \quad   (\log \varphi_l \circ \tau_w)'(y) = {-2 \delta_l}/{(y + w)}.\]
As $w \geqslant \alpha_l$, it follows that $\log \varphi_l \circ \tau_w$ is Lipschitz continuous with Lipschitz constant $2\delta_l/\alpha_l $.
In particular, for $w \in \cW^n_0$ and $z,\tilde{z} \in X_{n}$,  
\begin{align*} (\ast) = \left| \log \Phi_w (z) - \log \Phi_w (\tilde{z}) \right| 
 & \leqslant   \frac{2\delta_n}{\alpha_{n}}  \sum_{l=0}^{n-1}  \frac{1}{\alpha_l}   \prod_{j=l+1}^{n-1} \frac{1}{ \alpha_{j}^{2}} \\
 & =  \frac{2\delta_n}{\alpha_{n}\alpha_{n-1}}  \left( 1 +  \sum_{l=0}^{n-2}  \frac{1}{\alpha_l \alpha_{n-1}}   \prod_{j=l+1}^{n-2} \frac{1}{ \alpha_{j}^{2}} \right)
\end{align*}
As $\lim_n \alpha_n = \infty$, it follows that the sum on the right hand side is convergent. Therefore,  $(\ast) \ll \epsilon_n$. 
Moreover, as $\lim_n \epsilon_n = 0$, $(\ast)$ is uniformly bounded. Hence, there exists $C\geqslant 1$ such that $|\Phi_w (z)/\Phi_w (\tilde{z})-1| \leqslant C \epsilon_n$, which then implies that $\Phi_w (z) \leqslant (1+C\epsilon_n)\Phi_w (\tilde{z})$ and $\bL_0^n(\1)(z) \leqslant (1+C\epsilon_n) \bL_0^n(\1)(\tilde{z})$. The second statement follows from  this.
  \end{proof}
Hence Theorems \ref{theo:exponential decay} and \ref{theo:exponential decay for L1} are applicable, providing existence of $(\nu_n)$ and exponential decay. Furthermore, as $\llbracket \, \cdot \, \rrbracket: X_n  \to [0,1]$ is injective, we identify each $\nu_n$ with a probability measure supported on $\llbracket X_n \rrbracket \subset [0,1] \setminus \mathbb{Q}$. $X_0$ is identified with $X_\alpha$ as well.

\subsection{Expected value and variance of the logarithm.}

Let $\tlog  :(0,1] \to \R$ be given by $\tlog(x):= \log n$ for $x \in (1/(n+1),1/n]$, $n \in \N$. The following lemma describes the asymptotic behavior of the expected values and  variances of  $\tlog$ with respect to the measure $\nu_n$.

\begin{lemma}\label{lemma:Estimate_log} Under the assumptions of Lemma \ref{Lemma:Estimate_L(1)},
\begin{equation*}
\lim_{n\to \infty}  \int \left(\gamma_n \tlog+\log\gamma_n\right) d\nu_n = 1 ~~\text{ and }~~   \lim_{n\to \infty} \int \left(\gamma_n \tlog + \log\gamma_n-1\right)^2  d\nu_n = 1.
\end{equation*}
 If, in addition, $\sum_n 1/ \alpha_n<\infty$ then 
\begin{align}
\label{eq:expectation} 
\sum_{n=0}^\infty   \left|\int (\gamma_n \tlog+\log\gamma_n) d\nu_n -1\right| < \infty\\
\label{eq:variance} 
\sum_{n=0}^\infty \left|\int (\gamma_n \tlog+\log\gamma_n-1)^2 d\nu_n -1\right|< \infty
\end{align} 
\end{lemma}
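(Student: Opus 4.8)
The plan is to reduce everything to sums of the form $\sum_{l\geqslant\alpha_n} \ell^{-2\delta_n} g(\ell)$ for $g \in \{1, \log\ell, (\log\ell)^2\}$ and then use Euler--Maclaurin/integral comparison. First I would use Theorem \ref{theo:exponential decay} and Lemma \ref{Lemma:Estimate_L(1)} to replace integration against $\nu_n$ by integration against the "one-step" normalized measure. Concretely, $\int h\,d\nu_n = \lim_{k\to\infty}\P_n^k(h)(x_k)$, and by the distortion estimate in Lemma \ref{Lemma:Estimate_L(1)} the ratio $\bL_0^n(\1)(z)/\bL_0^n(\1)(\tilde z)$ is $1+O(\epsilon_n)$, so for a function $h$ depending only on the first coordinate $x_n$ one gets
\[
\int h\, d\nu_n = \frac{\sum_{\ell\geqslant\alpha_n}\varphi_n(\tau_\ell(\cdot))\,h(\ell)\,(1+O(\epsilon_n))}{\sum_{\ell\geqslant\alpha_n}\varphi_n(\tau_\ell(\cdot))\,(1+O(\epsilon_n))}
= \frac{\sum_{\ell\geqslant\alpha_n}\ell^{-2\delta_n}\,h(\ell)}{\sum_{\ell\geqslant\alpha_n}\ell^{-2\delta_n}}\,\bigl(1+O(\epsilon_n)\bigr),
\]
using $\varphi_n(\tau_\ell(x)) = (\ell+x)^{-2\delta_n} = \ell^{-2\delta_n}(1+O(1/\alpha_n))$ uniformly. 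Here $h = \gamma_n\tlog + \log\gamma_n$, which indeed depends only on $x_n$ when evaluated on the relevant cylinders, and $\tlog$ agrees with $\log\ell$ on $[\ell]$ up to an $O(1/\alpha_n)$ error that is harmless.

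Next, the analytic core: estimate $Z_n := \sum_{\ell\geqslant\alpha_n}\ell^{-2\delta_n} = \sum_{\ell\geqslant\alpha_n}\ell^{-(1+\gamma_n)}$ and its "$\log$-moments" $M_n^{(j)} := \sum_{\ell\geqslant\alpha_n}\ell^{-(1+\gamma_n)}(\log\ell)^j$ for $j=1,2$. By Euler--Maclaurin, $Z_n = \int_{\alpha_n}^\infty t^{-(1+\gamma_n)}\,dt + O(\alpha_n^{-(1+\gamma_n)}) = \gamma_n^{-1}\alpha_n^{-\gamma_n}(1 + O(\gamma_n/\alpha_n))$, and since $\gamma_n\alpha_n^{\gamma_n}=1$ this is $\gamma_n^{-2}(1+O(\gamma_n/\alpha_n))$. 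Similarly $\int_{\alpha_n}^\infty t^{-(1+\gamma_n)}(\log t)^j\,dt$ is computed by the substitution $u = \gamma_n\log t$, giving $\gamma_n^{-(j+1)}\int_{\gamma_n\log\alpha_n}^\infty u^j e^{-u}\,du$; and $\gamma_n\log\alpha_n = -\gamma_n\log\gamma_n \to 0$ by Remark \ref{remark:alpha vs gamma}. So the lower limit tends to $0$, the incomplete Gamma integrals converge to $j!$, and one obtains
\[
\frac{M_n^{(1)}}{Z_n} = \gamma_n^{-1}\bigl(1 + O(\gamma_n\log\alpha_n) + O(\gamma_n/\alpha_n)\bigr),\qquad
\frac{M_n^{(2)}}{Z_n} = 2\gamma_n^{-2}\bigl(1 + O(\gamma_n\log\alpha_n) + O(\gamma_n/\alpha_n)\bigr).
\]
Plugging in, $\int(\gamma_n\tlog+\log\gamma_n)\,d\nu_n = \gamma_n\cdot\gamma_n^{-1}(1+O(\gamma_n\log\alpha_n)) + \log\gamma_n + O(\epsilon_n) = 1 + \log\gamma_n + O(\gamma_n\log\alpha_n + \epsilon_n)$; but here lies a subtlety — one needs the $\log\gamma_n$ to cancel, which means the Euler--Maclaurin expansion of $M_n^{(1)}/Z_n$ must be carried to the next order. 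Writing $\gamma_n\log\alpha_n = -\gamma_n\log\gamma_n$, the second-order term of $\int_0^\infty u e^{-u}du$ versus $\int_{-\gamma_n\log\gamma_n}^\infty u e^{-u}du$ contributes exactly $-(-\gamma_n\log\gamma_n)\cdot\gamma_n^{-1} = \log\gamma_n$ at leading order, which cancels the stray $\log\gamma_n$. So the genuinely delicate point is tracking this cancellation precisely: one must show $\gamma_n M_n^{(1)}/Z_n = 1 - \log\gamma_n + (\text{error summable under }\sum 1/\alpha_n)$, and likewise $\gamma_n^2 M_n^{(2)}/Z_n = 2 - 2\log\gamma_n + (\log\gamma_n)^2 + (\text{summable error})$, so that $\int(\gamma_n\tlog+\log\gamma_n-1)^2 d\nu_n \to 1$.

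For the summability statements \eqref{eq:expectation} and \eqref{eq:variance}, I would collect all error terms and check each is $O(1/\alpha_n)$ up to constants: the distortion error is $O(\epsilon_n) = O(\alpha_n^{-1}\alpha_{n-1}^{-1}) = O(1/\alpha_n)$; the Euler--Maclaurin remainder is $O(\gamma_n/\alpha_n) = O(1/\alpha_n)$; and the incomplete-Gamma tail error, after the cancellation above, is $O((\gamma_n\log\gamma_n)^2) = o(1/\alpha_n)$ because $\gamma_n\log\alpha_n \to 0$ polynomially faster than any negative power of $\alpha_n$ fails to be summable — more carefully, $(\gamma_n\log\alpha_n)^2 = (\gamma_n\log\gamma_n)^2$ and using $\gamma_n = e^{-W(\log\alpha_n)}$ one checks $(\gamma_n\log\gamma_n)^2 \ll 1/\alpha_n$ for all large $n$ since $\gamma_n\log\alpha_n$ decays while $\log\alpha_n$ only grows logarithmically. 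Hence termwise $|\int(\gamma_n\tlog+\log\gamma_n)\,d\nu_n - 1| \ll 1/\alpha_n$ and similarly for the variance, so both series converge by the hypothesis $\sum 1/\alpha_n < \infty$. The main obstacle is unquestionably the bookkeeping in the second paragraph: getting the $\log\gamma_n$ terms to cancel requires the asymptotic expansion of the incomplete Gamma function $\Gamma(j+1,x)$ near $x=0$ to be carried out with explicit control of the first correction term, and then squaring for the variance multiplies the number of cross terms one must track.
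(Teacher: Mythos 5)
Your overall strategy—reduce $\int (\cdot)\,d\nu_n$ to the one-step normalized sums $\sum_{\ell\geqslant\alpha_n}\ell^{-(1+\gamma_n)}(\log\ell)^j$ via Theorem~\ref{theo:exponential decay} and the distortion bound of Lemma~\ref{Lemma:Estimate_L(1)}, then estimate those sums by integral comparison—is exactly what the paper does. But your asymptotic analysis of the sums contains a fatal algebra error that propagates through the rest of the argument.

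From $\gamma_n\alpha_n^{\gamma_n}=1$ one gets $\log\gamma_n+\gamma_n\log\alpha_n=0$, i.e.\ $\gamma_n\log\alpha_n=-\log\gamma_n$, not $-\gamma_n\log\gamma_n$ as you wrote. Since $\gamma_n\to 0$, this quantity tends to $+\infty$ (indeed $\gamma_n\log\alpha_n\sim\log\log\alpha_n$ by Remark~\ref{remark:alpha vs gamma}), not to $0$. Consequently the lower limit $\gamma_n\log\alpha_n$ in your substituted integral $\gamma_n^{-(j+1)}\int_{\gamma_n\log\alpha_n}^\infty u^je^{-u}\,du$ diverges, the incomplete Gamma integrals do not converge to $j!$ (they tend to $0$), and the entire expansion of $\Gamma(j+1,x)$ near $x=0$ that you invoke to produce the cancellation of $\log\gamma_n$ is applied in the wrong regime. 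The same slip shows up in your normalization: $Z_n\approx\gamma_n^{-1}\alpha_n^{-\gamma_n}=\gamma_n^{-1}\cdot\gamma_n=1$, not $\gamma_n^{-2}$, since $\alpha_n^{-\gamma_n}=\gamma_n$ (not $\gamma_n^{-1}$). Because of these two errors your stated asymptotic $M_n^{(1)}/Z_n=\gamma_n^{-1}(1+o(1))$ is false—the correct asymptotic is $M_n^{(1)}/Z_n\approx\log\alpha_n+\gamma_n^{-1}=\gamma_n^{-1}(1-\log\gamma_n)$, an expression larger by a factor $1-\log\gamma_n\to\infty$.

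The genuine mechanism is actually simpler than your incomplete-Gamma bookkeeping: multiplying by $\gamma_n$ gives $\gamma_n M_n^{(1)}/Z_n\approx\gamma_n\log\alpha_n+1=1-\log\gamma_n$, and adding $\log\gamma_n$ cancels exactly by the defining relation $\gamma_n\alpha_n^{\gamma_n}=1$; no Taylor expansion of $\Gamma(j+1,\cdot)$ near $0$ is involved. The paper carries this out directly via two-sided bounds on $\P_n^1(\tlog)$ and $\P_n^1(\tlog^2)$, obtaining $\|\gamma_n\P_n^1(\tlog)+\log\gamma_n-1\|_\infty\ll\epsilon_n(-\log\gamma_n)+\gamma_n(-\log\gamma_n)/\alpha_n$; the summability in \eqref{eq:expectation}--\eqref{eq:variance} then follows from $\sum\epsilon_n\log\log\alpha_n\leqslant\sum 1/\alpha_{n-1}<\infty$ and $\sum(\log\log\alpha_n)^2/(\alpha_n\log\alpha_n)\leqslant\sum 1/\alpha_n<\infty$, which is also different from (and more elementary than) your proposed bound $(\gamma_n\log\gamma_n)^2\ll 1/\alpha_n$. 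To repair your write-up you would need to redo the integral comparisons with the correct identity $\gamma_n\log\alpha_n=-\log\gamma_n$ from the start, at which point the incomplete-Gamma machinery is unnecessary.
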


\begin{proof}
It follows from Lemma \ref{Lemma:Estimate_L(1)} that there exists a constant $C$ such that for $n\in \mathbb N_0$ with $C \epsilon_n<1/3$ and $x\in X_{n+1}$,
\begin{align*}
\P_n^1(\tlog) (x) & = \frac{\bL_n^1(\tlog\cdot \bL_0^n(\1))}{\bL_0^{n+1}(\1)} (x) 
\leqslant \frac{(1+C\epsilon_n)\bL_n^1(\tlog)}{(1-C\epsilon_n)\bL_n^1(\1)} (x) \\
& \leqslant  \frac{(1+C\epsilon_n)\sum_{k \geqslant \alpha_n} {(\log k)/k^{2\delta_n}}}{(1-C\epsilon_n)\sum_{k\geqslant \alpha_n} {1/(k+1)^{2\delta_n}}}\\
& \leqslant \frac{1+C\epsilon_n}{1-C\epsilon_n} \frac{\gamma_n \log (\alpha_n -1) + 1}{\gamma_n^2 (\alpha_n -1)^{\gamma_n}} / \frac{1}{\gamma_n (\alpha_n+1)^{\gamma_n}}\\
& \leqslant \frac{1+C\epsilon_n}{1-C\epsilon_n}\left(\frac{\alpha_n+1}{\alpha_n-1}\right)^{\gamma_n} \frac{1 -  \log \gamma_n}{\gamma_n}\\
& \leqslant (1+3C\epsilon_n)\left(1+\frac{2\gamma_n}{\alpha_n-1}\right)\frac{1-\log\gamma_n}{\gamma_n}.
\end{align*}
The estimate from below is similar:
\begin{align*}
\P_n^1(\tlog)(x) & \geqslant \frac{1-C\epsilon_n}{1+C\epsilon_n}\frac{\alpha_n}{\alpha_n+1}\left(\frac{\alpha_n-1}{\alpha_n+1}\right)^{\gamma_n}\frac{1-\log \gamma_n}{\gamma_n} \\
&\geqslant \left(1-2C\epsilon_n\right)\left(1-\frac{4 \gamma_n }{\alpha_n}\right)\frac{1-\log\gamma_n}{\gamma_n}.
\end{align*}
As $-\log\gamma_n\sim \log\log\alpha_n$, it follows that $-\epsilon_n\log\gamma_n \sim \log\log\alpha_n/(\alpha_n \alpha_{n-1}) \to 0$ as $\alpha_n \to \infty$. Hence, $\|\gamma_n\P_n^1 (\tlog)+\log\gamma_n \|_\infty < \infty$.  It now follows from Theorem \ref{theo:exponential decay} (3) that for any sequence $x_k\in X_{n+k+1},$
\[  \int (\gamma_n \P_n^1(\tlog)+\log\gamma_n) d\nu_{n+1} = \lim_{k\to \infty} \P_{n+1}^k\left(\gamma_n \P_n^1(\tlog)+\log\gamma_n\right)(x_k). \]
Now we can employ $\P_n^k(\1)=\1$ and the estimates for $\P_n^1(\tlog)$ to deduce
$$\lim_{n\to\infty}\int (\gamma_n \tlog+\log\gamma_n) d\nu_n = \lim_{n\to\infty} \int \left (\gamma_n \P_n^1(\tlog)+\log\gamma_n\right) d\nu_{n+1}=1.$$
By the same argument, for all $x\in X_{n+1}$,
\begin{align*}
\P_n^1(\tlog^2)(x) 
&  \leqslant \frac {(1+C\epsilon_n)\sum_{k \geqslant \alpha_n} \log^2(k)/k^{\gamma_n +1}}{(1-C\epsilon_n)\sum_{k\geqslant\alpha_n} 1/(k+1)^{\gamma_n+1}} \\
& \leqslant \frac{1+C\epsilon_n}{1-C\epsilon_n}  \frac{\left(1 + \gamma_n \log(\alpha_n -1)\right)^2 +1}{\gamma_n^3 (\alpha_n -1)^{\gamma_n}} /\frac{1}{\gamma_n(\alpha_n+1)^{\gamma_n}}  \\
& \leqslant \frac{1+C\epsilon_n}{1-C\epsilon_n}\left(\frac{\alpha_n+1}{\alpha_n-1}\right)^{\gamma_n}\frac{(1 -  \log \gamma_n)^2 +1}{\gamma_n^2}
\end{align*}   
and $\P_n^1(\tlog^2)(x)\geqslant \frac{1-C\epsilon_n}{1+C\epsilon_n}\frac{\alpha_n}{\alpha_n+1}\left(\frac{\alpha_n-1}{\alpha_n+1}\right)^{\gamma_n}\frac{(1-\log \gamma_n)^2+1}{\gamma_n^2}.$  The limit follows analogously.

In order to prove \eqref{eq:expectation}, the estimates above indicate that it suffices to  verify the finiteness of $\sum -\epsilon_n \log\gamma_n$ and $\sum - (\gamma_n \log\gamma_n)/\alpha_n$. However, by remark \ref{remark:alpha vs gamma}, $\sum_n 1/ \alpha_n < \infty$ and $K$ sufficiently large,
\begin{align*} 
- \sum_{n=K}^\infty \epsilon_n \log\gamma_n & \asymp \sum_{n=K}^\infty \frac{\log\log\alpha_n}{\alpha_n \alpha_{n-1}} \leqslant  \sum_{n=K}^\infty \frac{1}{\alpha_{n-1}} < \infty, \\
-  \sum_{n=K}^\infty \frac{\gamma_n \log\gamma_n}{\alpha_n} & \asymp
\sum_{n=K}^\infty  \frac{(\log\log\alpha_n)^2}{\alpha_n \log\alpha_n}  \leqslant  \sum_{n=K}^\infty \frac{1}{\alpha_{n}} < \infty.
\end{align*}
The same arguments prove \eqref{eq:variance}.
  \end{proof}

\begin{remark}
In the course of proving \eqref{eq:expectation} we have actually shown that 
\begin{equation}\label{eq:finitesumP}
\sum_{n=0}^\infty\|\gamma_n\P_n^1\tlog+\log\gamma_n-1\|_\infty<\infty.
\end{equation}
\end{remark}

\begin{lemma} \label{lemma:var4} Under the assumptions of Lemma \ref{Lemma:Estimate_L(1)},
$$\lim_{n\to\infty}\int (\gamma_n\tlog+\log \gamma_n-1)^4 d\nu_n=9.$$
\end{lemma}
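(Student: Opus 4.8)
The plan is to follow exactly the scheme already used for the first and second moments in Lemma~\ref{lemma:Estimate_log}, replacing $\tlog$ and $\tlog^2$ by $\tlog^4$ (or more precisely by the centred quantity $(\gamma_n\tlog+\log\gamma_n-1)^4$). The starting point is the estimate, coming from Lemma~\ref{Lemma:Estimate_L(1)},
\[
\P_n^1(\tlog^4)(x) \leqslant \frac{1+C\epsilon_n}{1-C\epsilon_n}\left(\frac{\alpha_n+1}{\alpha_n-1}\right)^{\gamma_n}\frac{\sum_{k\geqslant\alpha_n}\log^4 k/k^{\gamma_n+1}}{\sum_{k\geqslant\alpha_n}1/(k+1)^{\gamma_n+1}}
\]
with the matching lower bound. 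The key computation is the Euler--Maclaurin / integral comparison
\[
\sum_{k\geqslant\alpha_n}\frac{\log^4 k}{k^{\gamma_n+1}} \sim \int_{\alpha_n-1}^\infty \frac{\log^4 t}{t^{\gamma_n+1}}\,dt = \frac{1}{\gamma_n}\sum_{j=0}^4 \binom{4}{j} j! \,\frac{\log^{4-j}(\alpha_n-1)}{\gamma_n^{j}},
\]
using that $\int_a^\infty t^{-\gamma-1}\log^m t\,dt$ is obtained by repeated integration by parts and that $a^{-\gamma}\to 1$ here since $\alpha_n^{\gamma_n}=1/\gamma_n$ and $\gamma_n\log\alpha_n = -\log\gamma_n\to\infty$ only polynomially in $\log\log\alpha_n$. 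Dividing by the denominator $\asymp 1/(\gamma_n(\alpha_n+1)^{\gamma_n})$ and multiplying by $\gamma_n^4$, the leading term is $\gamma_n^4\cdot\gamma_n^{-1}\cdot 4!\,\gamma_n^{-4}\cdot(1+o(1)) = 24(1+o(1))$ for the top-degree piece, and collecting all the binomial terms one gets $\gamma_n^4\P_n^1(\tlog^4)(x) \to (1-\log\gamma_n)^4/\gamma_n^{\,?}$-type expression; expanding $(1-\log\gamma_n)^4$ and using $\gamma_n\log(\alpha_n-1) = -\log\gamma_n + O(\gamma_n/\alpha_n)$ one sees that $\|\gamma_n\tlog+\log\gamma_n - (1-\log\gamma_n)^{0}\cdots\|$ organizes so that, after the affine change of variable $y\mapsto (y+\log\gamma_n)$, the fourth moment of the recentred variable tends to $9$ — which is exactly the fourth moment $\mathbb E[W^4]=9\sigma^4$ of a mean-zero variable of variance $1$ in the Gaussian-like scaling, consistent with the second moment being $1$ from Lemma~\ref{lemma:Estimate_log}. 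Concretely, since $\gamma_n\tlog(x) = \gamma_n\log k$ on $(1/(k+1),1/k]$ and the relevant $k$ range around $\alpha_n e^{t/\gamma_n}$, the rescaled variable $\gamma_n\tlog+\log\gamma_n-1$ converges in distribution (under $\nu_n$, via $\P_n^1$ and Theorem~\ref{theo:exponential decay}(3)) to a standard exponential random variable shifted to have mean $0$, whose fourth central moment is indeed $9$.

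The steps, in order: (i) write $\P_n^1(\tlog^4)$ as a ratio of two series as above and bound it from above and below by $(1\pm C\epsilon_n)$-distorted versions of $\big(\frac{\alpha_n\pm1}{\alpha_n\mp1}\big)^{\gamma_n}$ times the clean ratio of integrals; (ii) evaluate $\int_a^\infty t^{-\gamma_n-1}\log^m t\,dt$ for $m=0,1,2,3,4$ by integration by parts, getting polynomials in $1/\gamma_n$ and $\log a$; (iii) substitute $a=\alpha_n-1$ (resp.\ $\alpha_n+1$), use $\gamma_n\log(\alpha_n\pm1)=-\log\gamma_n+O(\gamma_n/\alpha_n)$ and $(\frac{\alpha_n+1}{\alpha_n-1})^{\gamma_n}=1+O(\gamma_n/\alpha_n)$, and the facts $\epsilon_n\to0$, $-\epsilon_n\log\gamma_n\to0$, $-(\gamma_n\log\gamma_n)/\alpha_n\to0$ from Remark~\ref{remark:alpha vs gamma}, to conclude $\|\gamma_n^4\P_n^1(\tlog^4) - p(-\log\gamma_n)\|_\infty\to 0$ for an explicit degree-$4$ polynomial $p$; (iv) expand $(\gamma_n\tlog+\log\gamma_n-1)^4 = \sum_{j=0}^4\binom{4}{j}(\log\gamma_n-1)^{4-j}(\gamma_n\tlog)^j$, apply step (iii) termwise, and watch the cancellation reduce $p(-\log\gamma_n)$ to the constant $9$; (v) finally invoke $\int g\,d\nu_n = \int \P_n^1 g\,d\nu_{n+1}$ together with Theorem~\ref{theo:exponential decay}(3) (exactly as in the proof of Lemma~\ref{lemma:Estimate_log}) to pass from the uniform estimate on $\P_n^1(\tlog^4)$ to the claimed limit $\lim_n\int(\gamma_n\tlog+\log\gamma_n-1)^4\,d\nu_n = 9$.

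The main obstacle is step (iv): keeping track of the cancellations. Each of $\P_n^1(\tlog)$, $\P_n^1(\tlog^2)$, $\P_n^1(\tlog^3)$, $\P_n^1(\tlog^4)$ individually blows up like a power of $1/\gamma_n$ times a power of $(1-\log\gamma_n)$, and it is only the specific affine combination dictated by expanding the fourth power of the recentred variable that produces a finite limit; one must verify that every term of order $(\log\gamma_n)^j/\gamma_n^i$ with $i>0$ cancels and that the surviving $O(1)$ terms sum to $9$ rather than to some other constant. A clean way to organize this, and the one I would actually write up, is probabilistic: show that under the normalisation the law of $\gamma_n\tlog$ pushed through $\P_n^1$ converges weakly to $\mathrm{Exp}(1)$ (this is transparent from the geometric-series form, since $\nu_n([k]) \asymp k^{-\gamma_n-1}$ up to the $(1\pm C\epsilon_n)$ distortion, so $\P(\gamma_n\tlog > t)\approx \alpha_n^{-\gamma_n}e^{-t}\cdot\alpha_n^{\gamma_n} = e^{-t}$ after using $\log\gamma_n$ to recentre), with uniform integrability of the fourth moments guaranteed by step (iii); then $\int(\gamma_n\tlog+\log\gamma_n-1)^4\,d\nu_n\to\mathbb E[(E-1)^4]$ where $E\sim\mathrm{Exp}(1)$, and $\mathbb E[(E-1)^4] = \mathbb E[E^4]-4\mathbb E[E^3]+6\mathbb E[E^2]-4\mathbb E[E]+1 = 24-24+12-4+1 = 9$. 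This both explains the number $9$ and sidesteps the brute-force bookkeeping, reducing the analytic content to the already-available distortion bounds plus the elementary tail asymptotics of the series.
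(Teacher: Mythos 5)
Your plan has two layers: steps (i)--(v) follow the same route as the paper's proof (integrate by parts to get explicit antiderivatives of $\log^m t\, / t^{1+\gamma}$, compare $\P_n^1(\tlog^m)$ to those integrals using the distortion bounds of Lemma~\ref{Lemma:Estimate_L(1)}, expand the fourth power, and watch the cancellation). The paper organizes the bookkeeping in step (iv) very efficiently: it defines $F_m$ so that $-\gamma^{m+1}x^\gamma\int x^{-1-\gamma}\log^m x\,dx = F_m(\gamma\log x + 1)$, which gives $F_1(u)=u$, $F_2(u)=u^2+1$, $F_3(u)=u^3+3u+2$, $F_4(u)=u^4+6u^2+8u+9$, and then the identity $F_4(c)-4cF_3(c)+6c^2F_2(c)-4c^3F_1(c)+c^4 \equiv a_4 = 9$ makes the cancellation an algebraic triviality; it holds for all $c$, so no delicate tracking of the growing factor $c_n=1-\log\gamma_n$ is needed.

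Your preferred write-up is the probabilistic one, and this is a genuinely different (and conceptually nicer) route from the paper's. The observation that, under $\nu_n$, the recentred variable $\gamma_n\tlog+\log\gamma_n$ converges weakly to $\mathrm{Exp}(1)$ --- which is essentially forced by the defining relation $\gamma_n\alpha_n^{\gamma_n}=1$, since $\P(\gamma_n\tlog+\log\gamma_n>s)\approx(\alpha_n e^{-(s-\log\gamma_n)/\gamma_n})^{\gamma_n}=e^{-s}$ --- explains the constant $9 = \mathbb E[(E-1)^4]$ and, in fact, explains all the $a_m=F_m(0)$ as central moments of $\mathrm{Exp}(1)$. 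What the paper's algebraic route buys is that you never need to formalize ``weak convergence plus uniform integrability'': you only ever manipulate the $F_m$'s and the same $\|\cdot\|_\infty$-bounds from Lemma~\ref{lemma:Estimate_log}. For your route to be airtight you must supply a uniform integrability argument for $(\gamma_n\tlog+\log\gamma_n-1)^4$; a uniform bound on the fourth moments alone is not enough. The cleanest fix is the uniform exponential tail bound $\nu_n(\gamma_n\tlog+\log\gamma_n>s)\ll e^{-s}$ (which you already have from the distortion estimate), giving domination and hence UI of all fixed powers.

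Two small errors worth correcting: $\mathbb E[W^4]=9\sigma^4$ is \emph{not} the fourth moment of a mean-zero Gaussian of variance $\sigma^2$ (that would be $3\sigma^4$); the second moment being $1$ and the fourth being $9$ is actually evidence the limit is \emph{not} Gaussian, and you correctly identify $\mathrm{Exp}(1)$ shifted by its mean immediately afterwards. Also, $a^{-\gamma}\to 1$ with $a=\alpha_n-1$ is false --- in fact $\alpha_n^{-\gamma_n}=\gamma_n\to 0$; what saves the computation is that the $a^{\gamma}$ factors cancel between numerator and denominator up to $((\alpha_n+1)/(\alpha_n-1))^{\gamma_n}=1+O(\gamma_n/\alpha_n)$, exactly as in the paper's Lemma~\ref{lemma:Estimate_log}. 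Neither slip is fatal to the argument, but both should be fixed before writing it up.
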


\begin{proof}
The proof uses the same arguments in the previous lemma. There are only few additional calculations as presented below. For any $\gamma>0$,
$$\int \frac{\log^4(x)}{x^{1+\gamma}}dx=\frac{-1}{\gamma^5 x^\gamma}\left((\gamma\log(x)+1)^4+6(\gamma\log(x)+1)^2+8(\gamma\log(x)+1)+9\right).$$
Let $F_n(x)$ be the polynomial function such that $$- \gamma^{n+1}x^\gamma \int \frac{\log^n(x)}{x^{1+\gamma}} d(x) = F_n(\gamma\log(x)+1),$$
and $a_n:=F_n(0)$. For any function $f$, let $f_n(x):=f^n(x)-F_n(1-\log\gamma)$. Then
\begin{align*}&\quad \left(f(x)-(1-\log\gamma)\right)^4\\
&=f_4(x)-4f_1(x)(1-\log\gamma)^3+6f_2(x)(1-\log\gamma)^2-4f_3(x)(1-\log\gamma)+a_4.\end{align*}
  \end{proof}

\subsection{Law of the iterated logarithm} Applying Theorem \ref{theo:invpp} to $X_\alpha$, we obtain the law of the iterated logarithm for the continued fraction expansions with restricted entries.

\begin{theorem} \label{prop:Khinchine-Stackelberg}
Assume that  $\sum_n 1/\alpha_n<\infty$. For $\nu_0$-a.e. $x\in X_\alpha$ with  continued fraction expansion $(x_0, x_1, \ldots)$, and $\gamma_n$ given by $\gamma_n\alpha_n^{\gamma_n}=1$,
\begin{equation} \label{eq:LIL-Digits}
\limsup_{n\to\infty} \frac{\sum_{k=0}^{n-1}(\gamma_k\log(x_k/\alpha_k)-1)}{\sqrt{2n\log\log n}}=1.
\end{equation}
The $n$-th approximant $p_n(x)/q_n(x) = \llbracket  x_0, \ldots, x_{n-1}, \infty \rrbracket $ satisfies a.s., with $\gamma_{-1}:=0$
\begin{equation} \label{eq:LIL-Approximant}\limsup_{n\to \infty} \frac{\sum_{k=0}^{n-1} \left((\gamma_{k-1}-\gamma_{k}) \log \frac{q_{n-k}(S^{k}(x))}{q_{n-k}(S^{k}(\alpha))} - 1\right)}{\sqrt{2 n\log\log n }} = 1.\end{equation}
\end{theorem}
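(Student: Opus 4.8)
Both statements will be deduced from the abstract invariance principle of Theorem~\ref{theo:invpp}, applied to the non-stationary shift $(X_n,\mathcal F_n,T_n,\nu_n)$ constructed in Section~4.1. For the law concerning the partial quotients, set $g_k:=\gamma_k\tlog+\log\gamma_k-1$ and $f_k:=g_k-\int g_k\,d\nu_k$. Since $\gamma_k\alpha_k^{\gamma_k}=1$ forces $\log\gamma_k=-\gamma_k\log\alpha_k$, and since $\tlog(T_0^k x)=\log x_k$ under the identification $X_0\cong X_\alpha$, one has $g_k\circ T_0^k=\gamma_k\log(x_k/\alpha_k)-1$; hence $\sum_{k=0}^{n-1}(\gamma_k\log(x_k/\alpha_k)-1)$ and $\sum_{k=0}^{n-1}f_k\circ T_0^k$ differ only by $\sum_{k=0}^{n-1}\int g_k\,d\nu_k$, a convergent series by \eqref{eq:expectation} and so almost surely bounded. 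As $\tlog$ is constant on the $1$-cylinders of each $X_k$, we get $\mathcal D_r(f_k)\equiv 0$; therefore $f_k\in\mathcal H^{\mathrm{loc}}_k$, $\int f_k\,d\nu_k=0$, the requirement $\sup_k\int(\mathcal D_r f_k)^2\,d\nu_k<\infty$ is automatic, and Lemma~\ref{lemma:var4} together with $\int g_k\,d\nu_k\to 0$ gives $\sup_k\int f_k^4\,d\nu_k<\infty$.

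The remaining hypothesis of Theorem~\ref{theo:invpp} is that $s_n^2:=\Var(\sum_{k=0}^{n-1}f_k\circ T_0^k)\to\infty$ with $\sum_n s_n^{-4}<\infty$; in fact I would show $s_n^2\sim n$, which also pins down the normalisation. By Lemma~\ref{lemma:Estimate_log} the diagonal terms satisfy $\Var(f_k\circ T_0^k)=\int g_k^2\,d\nu_k-(\int g_k\,d\nu_k)^2\to 1$, contributing $n+o(n)$. For the cross terms, the elementary identities relating $\P_n^k$ and $\nu_n$ give $\mathrm{Cov}(f_j\circ T_0^j,f_k\circ T_0^k)=\int f_k\cdot\P_j^{k-j}(f_j)\,d\nu_k$ for $j<k$; since $\int f_j\,d\nu_j=0$, Theorem~\ref{theo:exponential decay for L1}(2) bounds this by $C s^{\,k-j}$, while $\|\P_j^{k-j}(f_j)\|_\infty\le\|\P_j^{1}(f_j)\|_\infty$ together with \eqref{eq:finitesumP} shows $\|\P_j^{1}(f_j)\|_\infty\to 0$. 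Thus each covariance is $\ll\min(\|\P_j^{1}f_j\|_\infty,\,s^{\,k-j})$, so $\sum_{0\le j<k\le n-1}|\mathrm{Cov}|=o(n)$ and $s_n^2=n+o(n)$. Consequently $\sum_n s_n^{-4}<\infty$, and Theorem~\ref{theo:invpp} provides independent centred Gaussians $(Z_k)$ with $\sum_{k<n}\Var Z_k\sim s_n^2\sim n$, $\Var Z_k$ bounded, and $\sup_{0\le k\le n-1}|\sum_{i=0}^k f_i\circ T_0^i-\sum_{i=0}^k Z_i|=o(\sqrt{n\log\log n})$ a.s. The classical law of the iterated logarithm for sums of independent Gaussians then gives $\limsup_n(\sum_{k=0}^{n-1}Z_k)/\sqrt{2n\log\log n}=1$ a.s., and together with the preceding estimate and the bounded correction above this is exactly \eqref{eq:LIL-Digits}. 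The delicate point is the asymptotic $s_n^2\sim n$, which is precisely where $\sum_n 1/\alpha_n<\infty$ is used, through Lemma~\ref{lemma:Estimate_log} and \eqref{eq:finitesumP}; everything else is bookkeeping.

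For the law concerning the approximants I would reduce \eqref{eq:LIL-Approximant} to \eqref{eq:LIL-Digits} by a summation by parts. Put $B_k:=\log\frac{q_{n-k}(S^k x)}{q_{n-k}(S^k\alpha)}$, so that $B_n=0$ and $B_{n-1}=\log(x_{n-1}/\alpha_{n-1})$. The denominator recursion $q_m(S^l y)=y_l\,q_{m-1}(S^{l+1}y)+q_{m-2}(S^{l+2}y)$ yields the increment estimate $\log q_{n-k}(S^k y)-\log q_{n-k-1}(S^{k+1}y)=\log y_k+O(1/y_k)$, hence $B_k-B_{k+1}=\log(x_k/\alpha_k)+\rho_k$ with $\sum_k|\rho_k|\ll\sum_k 1/\alpha_k<\infty$. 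Since the partial sums of $(\gamma_{k-1}-\gamma_k)$ telescope, remaining in $[-1,0]$, Abel summation converts $\sum_{k=0}^{n-1}(\gamma_{k-1}-\gamma_k)B_k$ into $-\sum_{k=0}^{n-1}\gamma_k\log(x_k/\alpha_k)$ plus the absolutely convergent remainder $-\sum_k\gamma_k\rho_k$ (absolute convergence using $0<\gamma_k\le 1$). This identifies the numerator in \eqref{eq:LIL-Approximant}, up to an almost surely bounded error and the sign introduced by the telescoping, with the numerator in \eqref{eq:LIL-Digits}, so \eqref{eq:LIL-Approximant} follows from the law just proved together with its symmetric form ($\liminf=-1$), which the same invariance principle delivers.

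The genuinely new ingredient in the last step is that the error $\sum_k\rho_k$ in the increment estimate is \emph{summable}, not merely bounded term by term — this is again where $\sum_n 1/\alpha_n<\infty$ enters — after which the reduction is a routine summation by parts; for the approximants one moreover uses $q_0\equiv 1$ and $q_1(S^{n-1}y)=y_{n-1}$ to identify the boundary terms. I expect the main obstacle throughout to be the variance control: verifying $s_n^2\sim n$ requires both the moment asymptotics of Lemma~\ref{lemma:Estimate_log} and the Cesàro–type smallness of the covariances coming from the uniform decay of correlations of Theorem~\ref{theo:exponential decay for L1}, and it is this step (not the application of Theorem~\ref{theo:invpp} itself) where the hypothesis $\sum_n 1/\alpha_n<\infty$ is indispensable.
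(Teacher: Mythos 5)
Your proof of \eqref{eq:LIL-Digits} follows the paper's route closely: same choice of $f_n=\gamma_n\tlog+\log\gamma_n$ (centred), same appeal to Lemmas~\ref{lemma:Estimate_log} and~\ref{lemma:var4} for the moment hypotheses of Theorem~\ref{theo:invpp}, and the same two-regime covariance estimate using Theorem~\ref{theo:exponential decay for L1} together with \eqref{eq:finitesumP}. The only cosmetic difference is that the paper records $s_n^2/n=1+O(1/n)$ while you settle for $s_n^2\sim n$, and that the paper invokes Chow--Teicher's weighted LIL rather than the ``classical LIL for independent Gaussians''; both variants suffice and are interchangeable here. No issue.

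For \eqref{eq:LIL-Approximant} your method is genuinely different. The paper passes through the bound $|\sum_{k<n}\log S^k(y)+\log q_n(y)|\le 4+2\log 2$ and the identity $\sum_k\gamma_k\log\frac{p_{n-k}(S^kx)}{q_{n-k}(S^kx)}=\sum_k(\gamma_{k-1}-\gamma_k)\log q_{n-k}(S^kx)$, then compares with \eqref{eq:general-LIL}. You instead set $B_k=\log\frac{q_{n-k}(S^kx)}{q_{n-k}(S^k\alpha)}$, derive the increment estimate $B_k-B_{k+1}=\log(x_k/\alpha_k)+\rho_k$ with $\sum_k|\rho_k|<\infty$ from the denominator recursion, and run an Abel summation. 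Both routes arrive at the same exact relation,
\[
\sum_{k=0}^{n-1}(\gamma_{k-1}-\gamma_k)B_k \;=\; -\sum_{k=0}^{n-1}\gamma_k\log(x_k/\alpha_k)\;+\;O(1),
\]
and this is correct. The gap is in the very last step of your argument. Subtracting $n$ from both sides gives that the numerator of \eqref{eq:LIL-Approximant} equals $-\sum_k\gamma_k\log(x_k/\alpha_k)-n+O(1)$, whereas the numerator of \eqref{eq:LIL-Digits} is $\sum_k\gamma_k\log(x_k/\alpha_k)-n$. These are \emph{not} equal up to sign and a bounded error: writing $D_n:=\sum_k(\gamma_k\log(x_k/\alpha_k)-1)$, the former is $-D_n-2n+O(1)$, not $-D_n+O(1)$. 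So appealing to the symmetric $\liminf=-1$ form does not close the gap; the deterministic drift $-2n$ overwhelms the $\sqrt{2n\log\log n}$ normalisation, and as written your argument produces $\limsup=-\infty$.

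It is worth flagging that the same computation applies to the paper's own reduction: the coefficient in the statement should be $\gamma_k-\gamma_{k-1}$ rather than $\gamma_{k-1}-\gamma_k$ for the numerator to match either \eqref{eq:general-LIL} or \eqref{eq:LIL-Digits} up to an $O(1)$ error. With that sign, both the paper's argument and your Abel-summation argument go through directly — and in your case the detour via $\liminf$ is then unnecessary. So the underlying ideas of your approach are sound and arguably cleaner than the paper's (the increment estimate and Abel summation make the summability hypothesis visible exactly where it is used), but the claim that the sign discrepancy is absorbed by considering $\liminf$ is incorrect and should be replaced by noticing that the intended coefficient is $\gamma_k-\gamma_{k-1}$.
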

\begin{proof} In order to prove \eqref{eq:LIL-Digits}, 
for $n\in\mathbb N_0$ and $x\in X_n$, define $f_n: X_n\to\R$ by $$f_n(x):=\gamma_n\tlog(x)+\log\gamma_n=\gamma_n\log(x_n/\alpha_n)>0.$$ Clearly $D_r(f_n)=0$. Lemma \ref{lemma:Estimate_log} shows that $f_n\in\mathcal H_n^{loc}$ and, with $\tilde f_n:=f_n-\int f_n d\nu_n$,  that
$$\sum_{n=0}^\infty\left|\int f_n d\nu_n-1\right|<\infty, \quad \sum_{n=0}^\infty\left|\int\tilde f_n^2d\nu_n-1\right|<\infty.$$ 
We will check the conditions in Theorem \ref{theo:invpp} for $\{\tilde f_n\}$.
Lemma \ref{lemma:var4} implies that $$\sup_n \int \tilde f_n^4d\nu_n<\infty.$$
For $s_n^2:=\int (\sum_{k=0}^{n-1} \tilde f_k\circ T_0^k)^2 d\nu_0$, we have
\begin{align*}
s_n^2-n=\sum_{k=0}^{n-1}\left(\int \tilde f_k^2 d\nu_k-1\right)+2\sum_{0\leqslant k<j\leqslant n-1}\int \tilde f_k\cdot \tilde f_j\circ T_k^{j-k} d\nu_k.
\end{align*}
By Theorem \ref{theo:exponential decay for L1} for some constant $C$ and $s\in (0,1)$, when $j-k>\sqrt{n}$ 
\begin{align*}
\sum_{\substack{0\leqslant k<j\leqslant n-1\\ j-k>\sqrt n}}\left|\int \tilde f_k\cdot \tilde f_j\circ T_k^{j-k} d\nu_k\right|&=\sum_{\substack{0\leqslant k<j\leqslant n-1\\ j-k>\sqrt n}}\left|\int \P_k^{j-k}\tilde f_k\cdot \tilde f_j d\nu_j\right|\\
&\leqslant C\sum_{\substack{0\leqslant k<j\leqslant n-1\\ j-k>\sqrt n}}s^{\sqrt {j-k}} \|f_k\|_{k,1}\cdot \|f_j\|_{j,1}\ll s^{\sqrt n}n,
\end{align*}
while by \eqref{eq:finitesumP} when $1\leqslant j-k\leqslant \sqrt n$ 
\begin{align*}\sum_{\substack{0\leqslant k<j\leqslant n-1\\1\leqslant j-k\leqslant \sqrt{n}}}\left|\int \tilde f_k\cdot \tilde f_j\circ T_k^{j-k} d\nu_k\right|&=\sum_{\substack{0\leqslant k<j\leqslant n-1\\1\leqslant j-k\leqslant\sqrt{n}}}\left|\int \P_k^{1}\tilde f_k\cdot \tilde f_ j\circ T_{k+1}^{j-k-1}\nu_{k+1}\right|\\
&\leqslant \sum_{k=0}^\infty \|\P_k^1\tilde f_k\|_\infty\cdot\sum_{j=1}^{\sqrt n}\|\tilde f_j\|_{j,1}\ll \sqrt n.
\end{align*}
It follows from these estimates that
$${s_n^2}/{n}=1+O(1/n).$$
Now we can obtain by Theorem \ref{theo:invpp} that there exists a sequence $(Z_n)$ of independent centered Gaussian random variables such that 
\begin{align}\label{eq:invp}
 \sup_{0\leqslant k \leqslant n-1} \left| \sum_{i=0}^k \tilde f_i\circ T_0^i - \sum_{i=0}^k Z_i \right| = o(\sqrt{n \log \log n}) \hbox{ a.s.,} \\
\nonumber  \sup_n\left|\sqrt{\sum_{k=0}^{n-1} \Var(Z_k)}-\sqrt n\right|<\infty, \quad 
 \sup_n| \|\tilde f_n\|_{n,2}-\sqrt {\Var (Z_n)}|<\infty.
\end{align} 
 Hence, $\sup_n\Var (Z_n)<\infty$ as well. This implies that  
$ \sum_{k=0}^{n-1}\Var(Z_k)\to\infty$  and  $\frac{\Var (Z_{n-1})}{\sum_{k=0}^{n-1}\Var(Z_k)}\ll \frac1n $. Then one can employ the LIL for weighted averages by Chow and Teicher \cite[Theorem 1]{ChowTeicher:1973} along with \eqref{eq:invp} to finish the proof of \eqref{eq:LIL-Digits}.

For the proof of \eqref{eq:LIL-Approximant}, observe that for $x=\llbracket x_0,x_1,\ldots\rrbracket$, $S^k(x)=\llbracket x_k,x_{k+1},\ldots\rrbracket$. Hence, for $x\in X_\alpha$, $-\gamma_k\log(S^k(x))=\gamma_k\log(x_k)+o(1/\alpha_k)$. In particular, it follows from \eqref{eq:LIL-Digits} that 
\begin{equation}\label{eq:general-LIL}
\limsup_{n\to\infty} \frac{\sum_{k=0}^{n-1}(\gamma_k\log\frac{S^k(\alpha)}{S^k(x)}-1)}{\sqrt{2n\log\log n}}=1. \hbox{ a.s..} \end{equation} 
Furthermore, it is well known that (e.g. \cite[page 85]{EinsiedlerWard:2011}) for each $x \in [0,1]\setminus \mathbb{Q}$, 
\[\left| \sum_{k=0}^{n-1}   \log S^{k}(x) + \log q_n(x) \right| \leqslant 4 + 2\log 2. \]
As $p_k(x)=q_{k-1}(S(x))$, it follows that 
\[\sum_{k=0}^{n-1} \gamma_{k}\log\frac{p_{n-k}(S^{k}(x))}{q_{n-k}(S^{k}(x))}=\sum_{k=0}^{n-1} (\gamma_{k-1}-\gamma_{k})\log (q_{n-k}(S^{k}(x))).\]
The remaining assertion follows then from \eqref{eq:general-LIL}. 
  \end{proof}

\section{Hausdorff dimension and measure} We are now in position to prove
 the following proposition relating  $\nu_0$ with the $\delta$-dimensional Hausdorff measure $\mu_\delta$ on $X_\alpha$.

\begin{proposition}\label{prop:HD-dimension} Assume that $\sum_n  1/\alpha_n <\infty$ and that there exists $\delta >0$ with 
\[ \limsup_{n \to \infty} \frac{1}{n}\left(  \frac{\log \alpha_n}{2} \left( \frac{(1+ \delta)\log n}{\log \log \alpha_n} +1  \right)   - \sum_{k=0}^{n-1} \log \log \alpha_k  \right) <1, \]
then $X_\alpha$ has Hausdorff dimension $1/2$ and $\nu_0$ is absolutely continuous with respect to 
$\mu_{1/2}$, but there exist subsets  $A$ of $X_\alpha$ with $\nu_0(A) =0 $ and  $\mu_{1/2}(A)> 0$. Furthermore, 
$\mu_{1/2} ( \llbracket w \rrbracket)= \infty$ for each finite word $w$.
\end{proposition}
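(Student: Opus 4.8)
The plan is to compare $\nu_0$ with $\mu_{1/2}$ first on cylinders and then pass from cylinders to balls, the only delicate point being the accumulation points $\llbracket w_0,\dots,w_{n-1},\infty\rrbracket$ of families of sibling cylinders. Identifying $X_0$ with $X_\alpha\subset[0,1]\setminus\mathbb{Q}$, writing $|\llbracket w\rrbracket|$ for Lebesgue length and $\xi_l:=\gamma_l\log(x_l/\alpha_l)\ge 0$, I would first establish a Gibbs estimate. Using Theorem~\ref{theo:exponential decay}(3), bounded distortion of the continued fraction potential, Lemma~\ref{Lemma:Estimate_L(1)} and $\sum_l 1/\alpha_l<\infty$, one gets $\|\bL_0^n(\1)\|_\infty\asymp\inf_{X_n}\bL_0^n(\1)\asymp 1$ (the products $\prod_l(1\mp\alpha_l^{-1})^{-\gamma_l}$ converge), $q_n(w)\asymp\prod_{l<n}w_l$, hence $|\llbracket w\rrbracket|\asymp(\prod_{l<n}w_l)^{-2}$, and for $w=(w_0,\dots,w_{n-1})\in\cW_0^n$
\[
\nu_0(\llbracket w\rrbracket)\ \asymp\ \varphi_w\ \asymp\ \prod_{l<n}w_l^{-2\delta_l}\ \asymp\ |\llbracket w\rrbracket|^{1/2}\prod_{l<n}w_l^{-\gamma_l},
\]
uniformly; in particular $\nu_0(\llbracket w\rrbracket)\le C|\llbracket w\rrbracket|^{1/2}$ (since $w_l\ge\alpha_l$ forces $\prod w_l^{-\gamma_l}\le 1$), and $\nu_0(\xi_m\ge s)\asymp e^{-s}$ uniformly in $m$ (from $\nu_0(x_m\ge t)\asymp(t/\alpha_m)^{-\gamma_m}$ for $t\ge\alpha_m$, which in turn comes from $\nu_m([k])\asymp k^{-1-\gamma_m}$). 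One also records, for later, that for $t>1/2$ the level-$n$ cylinder cover gives $\sum_{w\in\cW_0^n}|\llbracket w\rrbracket|^t\asymp\prod_{l<n}\sum_{k\ge\alpha_l}k^{-2t}\to 0$ with mesh tending to $0$, so $\mu_t(X_\alpha)=0$ and $\dim_{\hbox{\tiny H}}X_\alpha\le 1/2$.

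The heart of the argument is to show that $\limsup_{r\to 0}\nu_0(B(x,r))/r^{1/2}=0$ for $\nu_0$-a.e.\ $x=\llbracket x_0,x_1,\dots\rrbracket$. A geometric reduction based on the Gibbs estimate and the cylinder structure of continued fractions shows that, over the range of scales $|\llbracket x_0,\dots,x_m\rrbracket|\le r\le|\llbracket x_0,\dots,x_{m-1}\rrbracket|$,
\[
\sup_r\ \frac{\nu_0(B(x,r))}{r^{1/2}}\ \asymp\ \frac{x_m^{\,1/2-\gamma_m}}{\gamma_m}\ \prod_{l<m}x_l^{-\gamma_l},
\]
the worst scale being $r\asymp x_m^{-1}|\llbracket x_0,\dots,x_{m-1}\rrbracket|$, for which $B(x,r)$ already engulfs the whole accumulation tail $\bigcup_{k\ge c x_m}\llbracket x_0,\dots,x_{m-1},k\rrbracket$, of $\nu_0$-mass $\asymp\prod_{l<m}x_l^{-2\delta_l}\,x_m^{-\gamma_m}/\gamma_m$, while a bounded number of sibling cylinders accounts for the remaining scales. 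Hence $\limsup_{r\to 0}\nu_0(B(x,r))/r^{1/2}\asymp\limsup_m\frac{x_m^{1/2-\gamma_m}}{\gamma_m}\prod_{l<m}x_l^{-\gamma_l}$, and, taking logarithms and using $\gamma_l\log\alpha_l=-\log\gamma_l$,
\[
\log\Bigl(\tfrac{x_m^{1/2-\gamma_m}}{\gamma_m}\textstyle\prod_{l<m}x_l^{-\gamma_l}\Bigr)=\tfrac12\log\alpha_m+\tfrac{1-2\gamma_m}{2\gamma_m}\,\xi_m-\textstyle\sum_{l<m}\gamma_l\log\alpha_l-\sum_{l<m}\xi_l.
\]

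Into this identity I would feed two almost sure facts. First, by the Gibbs estimate and Borel--Cantelli, $\xi_m\le(1+\tfrac{\delta}{2})\log m$ eventually $\nu_0$-a.s., since $\nu_0(\xi_m\ge(1+\tfrac{\delta}{2})\log m)\asymp m^{-(1+\delta/2)}$ is summable. Second, by Theorem~\ref{prop:Khinchine-Stackelberg} and the accompanying law of large numbers $\tfrac1m\sum_{l<m}\xi_l\to 1$, one has $\sum_{l<m}\xi_l\ge(1-\eta_0)m$ eventually $\nu_0$-a.s., for any fixed $\eta_0>0$. Substituting, and rewriting $1/\gamma_m$ and $\sum_{l<m}\gamma_l\log\alpha_l$ via Remark~\ref{remark:alpha vs gamma} ($1/\gamma_m\sim\log\alpha_m/\log\log\alpha_m$ and, by Stolz--Ces\`aro, $\sum_{l<m}\gamma_l\log\alpha_l=(1+o(1))\sum_{l<m}\log\log\alpha_l$), the right-hand side is, up to lower-order corrections, at most $m$ times the expression inside the $\limsup$ of the hypothesis (with $\delta$ replaced by $\delta/2$) minus $(1-\eta_0)m$; choosing $\eta_0$ small, this tends to $-\infty$. \textbf{Turning this bookkeeping into a rigorous estimate is the main obstacle:} one has to absorb the discrepancy between the $\gamma$-weighted and the $\log\log\alpha$-weighted sums --- it is $o(m)$ when $\sum_{l<m}\log\log\alpha_l=O(m)$, and otherwise is swamped by the dominant negative term $-\sum_{l<m}\log\log\alpha_l$, the ``balanced'' regime in which they could compete being precisely what the hypothesis excludes (for $\alpha_n\approx n^{n}$, say, the $\limsup$ is $+\infty$). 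The strict inequality ``$<1$'' and the freedom in choosing $\delta>0$ supply exactly the necessary margin.

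It remains to harvest the four conclusions. Set $G_C:=\{x:\nu_0(B(x,r))\le r^{1/2}/C\ \text{for all}\ r<1/C\}$, so $G_C\uparrow G:=\{x:\limsup_{r\to0}\nu_0(B(x,r))/r^{1/2}=0\}$ and $\nu_0(G)=1$ by the previous step. If $\mu_{1/2}(A)=0$, then for each $\epsilon>0$ there is a cover $A\subset\bigcup_i U_i$ with $\diam U_i<1/C$ and $\sum_i|U_i|^{1/2}<\epsilon$; choosing each $U_i$ to contain a point of $A\cap G_C$ gives $\nu_0(A\cap G_C)\le\sum_i\nu_0(U_i)\le C^{-1}\sum_i|U_i|^{1/2}<\epsilon/C$, so $\nu_0(A\cap G_C)=0$ for every $C$ and $\nu_0(A)=0$; thus $\nu_0\ll\mu_{1/2}$. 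For a finite word $w$ one has $\nu_0(\llbracket w\rrbracket)>0$ and $\nu_0(\llbracket w\rrbracket\cap G_C)\uparrow\nu_0(\llbracket w\rrbracket)$, and any cover of $\llbracket w\rrbracket\cap G_C$ by sets of diameter $<1/C$ centred in $G_C$ satisfies $\sum_i|U_i|^{1/2}\ge C\sum_i\nu_0(U_i\cap G_C)\ge C\,\nu_0(\llbracket w\rrbracket\cap G_C)$, so the size-$(1/C)$ Hausdorff pre-measure of $\llbracket w\rrbracket$ is $\ge C\,\nu_0(\llbracket w\rrbracket\cap G_C)\to\infty$ as $C\to\infty$; hence $\mu_{1/2}(\llbracket w\rrbracket)=\infty$, and with the covering bound already noted this also gives $\dim_{\hbox{\tiny H}}X_\alpha=1/2$. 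Finally, as $\mu_{1/2}(\llbracket X_\alpha\rrbracket)=\infty$, Besicovitch's theorem on Hausdorff measures furnishes a Borel $A\subset\llbracket X_\alpha\rrbracket$ with $0<\mu_{1/2}(A)<\infty$; if $\nu_0(A)>0$, then $\nu_0|_A\ll\mu_{1/2}|_A$ are finite measures, the Besicovitch differentiation theorem yields a density $g=d(\nu_0|_A)/d(\mu_{1/2}|_A)$ strictly positive on a set of positive $\nu_0$-measure, and together with the standard lower bound for the $1/2$-density of $\mu_{1/2}|_A$ this would force $\limsup_{r\to0}\nu_0(B(x,r))/r^{1/2}>0$ on a set of positive $\nu_0$-measure, contradicting the key estimate; hence $\nu_0(A)=0<\mu_{1/2}(A)$, completing the proof.
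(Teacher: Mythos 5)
Your proposal follows essentially the same route as the paper: an upper bound for $\dim_{\hbox{\tiny H}}$ via level-$n$ cylinder covers, a Gibbs estimate $\nu_0(\llbracket w\rrbracket)\asymp\varphi_w$, a Borel--Cantelli bound on $x_n$, and the law of large numbers from Theorem \ref{prop:Khinchine-Stackelberg} combined to yield $\lim_{r\to 0}\nu_0(B_r(x))/r^{1/2}=0$ a.e., followed by the standard density/Egorov argument for absolute continuity, $\mu_{1/2}(\llbracket w\rrbracket)=\infty$, and the existence of a $\nu_0$-null set of positive $\mu_{1/2}$-measure. The two points you yourself flag as needing care --- the cylinder-to-ball passage absorbing the accumulation tail of sibling cylinders, and the discrepancy between $\sum_k\gamma_k\log\alpha_k$ and $\sum_k\log\log\alpha_k$ --- are precisely what the paper's proof handles explicitly (via the estimate for unions of adjacent cylinders and the exact identity $\log\gamma_k=-\gamma_k\log\alpha_k$), so your outline is sound and matches the paper's own proof.
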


\begin{proof} 
We begin with the upper bound for the Hausdorff dimension which immediately follows from Good's result in \cite{Good:1941} mentioned above. However, in order to reveal the geometric nature of $\nu_0$, we prove the upper bound in the following way. For $\epsilon >0$ choose $n \in \N$ such that $\delta_k = (\gamma_k + 1)/2  \leqslant 1/2 + \epsilon$ for all $k \geqslant n$ (see Remark \ref{remark:alpha vs gamma}). As $\varphi$ is a geometric potential of bounded distortion, we have, for $w \in \cW_0^n$ and $u =(u_1, \ldots, u_m) \in \cW_n^m $ for some $m$ and $v:= (u_1, \ldots, u_{m-1}, u_m+1)$ that 
\begin{align*}
\frac{\nu_0( [wu] \cup [wv])}{(\diam  (\llbracket wu \rrbracket \cup \llbracket wv \rrbracket))^{1/2 + \epsilon}} 
& \gg   \frac{\Phi_{w}^{\delta_0}  \left(\Phi_u^{1/2 + \epsilon} + \Phi_v^{1/2 + \epsilon}\right) }{\Phi_{wu}^{1/2 + \epsilon} \left(\Phi_u^{1/2 + \epsilon} + \Phi_v^{1/2 + \epsilon}\right)}
=\frac{\Phi_{w}^{\delta_0}}{\Phi_{w}^{1/2 + \epsilon}}.
\end{align*}
Hence, by considering disjoint covers of $X_\alpha \cap \llbracket w \rrbracket$ by sets of type $[wu] \cup [wv]$, one obtains a uniform upper bound for $\mu_{1/2 + \epsilon}(\llbracket w\rrbracket )$. Hence the Hausdorff dimension of $\llbracket w \rrbracket$ is smaller than or equal to $1/2+\epsilon$. This then implies that the Hausdorff dimension of $X_\alpha$ is bounded by $1/2+\epsilon$ and as  $\epsilon$ is arbitrary, smaller than or equal to $1/2$. 

The lower bound requires estimates for the diameters of cylinder sets and their unions. As $\diam \llbracket X_n\rrbracket \sim 1/\alpha_{n}$,  
$\varphi$ is a geometric potential and $\log \Phi_w$ is H\"older continuous with respect to uniform constants for each $w=(w_0, \ldots, w_{n-1}) \in \cW_0^n$, we have 
\[
\diam ( \llbracket w \rrbracket \cap X_\alpha)\sim \Phi_w(0) /\alpha_{n} .
\]
For $x \in  \llbracket w \rrbracket$ and $0 \leqslant k \leqslant n-1$, let $\xi_k(x) := \left(\alpha_k \sqrt{\varphi(S^k(x))}\right)^{\gamma_k}$.  
Using $\gamma_k \alpha_k^{\gamma_k}= 1 $, one obtains the following upper bound for the ratio of a cylinder and its diameter. 
\begin{align*}
\frac{\nu_0( \llbracket w \rrbracket )}{\sqrt{\diam ( \llbracket w \rrbracket  \cap X_\alpha))}}
& 
\asymp \frac{\prod_{k=0}^{n-1} \varphi_k(S^k(x))}{\prod_{k=0}^{n-1}  \varphi(S^k(x))^{1/2}} \cdot \alpha_{n}^{1/2}
\\
& \asymp
\prod_{k=0}^{n-1}  \frac{  (\alpha_k^2 \varphi(S^k(x)))^{\delta_k}}{( \alpha_k^2 \varphi(S^k(x))^{1/2}}  \cdot
 \alpha_k^{1-2\delta_k}\alpha_{n}^{1/2} = \alpha_{n}^{1/2} \prod_{k=0}^{n-1} \gamma_k \cdot \xi_k(x).
\end{align*}
Furthermore, for $t > s \geqslant \alpha_{n}$ and a union of adjacent cylinders $A= \bigcup_{u=s}^t  \llbracket wu \rrbracket$, it follows from 
\begin{align*}
\diam (A)  \geqslant \diam (A \cap X_\alpha) > \diam ( \textstyle \bigcup_{u=s+1}^{t}  \llbracket wu \rrbracket) 
\end{align*}
that $\diam (A)  \asymp  \diam (A \cap X_\alpha)$. 
Then if  $x \in A$, we obtain that
\begin{align*}
\frac{\nu_0( A)}{\sqrt{\diam (A \cap X_\alpha))}} & \asymp
\frac{\prod_{k=0}^{n-1} \varphi_k(S^k(x))}{\prod_{k=0}^{n}  \varphi(S^k(x))^{1/2}} 
\cdot
\frac{\sum_{l=s}^t l^{-2\delta_{n}}}{\left(\sum_{l=s}^t l^{-2}\right)^{1/2}} \\
& \asymp \prod_{k=0}^{n-1}  (\gamma_k \xi_k(x))
 \cdot  
\frac{  \frac{1}{\gamma_{n}}   \left( {s^{-\gamma_{n}}} - {t^{-\gamma_{n}}} \right) }{ \left( s^{-1} - t^{-1}  \right)^{1/2} } \\
& \leqslant \prod_{k=0}^{n-1} 
(\gamma_k \xi_k(x))  \cdot  \alpha_{n}^{\gamma_n} \cdot  s^{1/2 - \gamma_{n}} \\
& \leqslant  x_{n}^{1/2} \prod_{k=0}^{n-1} \gamma_k \cdot \xi_k(x).
\end{align*}
It follows from the construction of $\nu_0$  that 
\begin{align*}
\nu_{0}(\{x: x_{n}^{1/2} \geqslant b\})=\nu_{n}(\{x=(x_n,\ldots): x_n^{1/2}\geqslant b\}) \asymp ( \alpha_{n}/b^2)^{\gamma_{n}}.
\end{align*}  
Hence, let $b_n := (\alpha_n  n^{(1+\delta)/\gamma_n})^{1/2}$ for some $\delta > 0$, then   
 $\sum_n \nu_{0}(\{x: x_n^{1/2} \geqslant b_n\}) < \infty$  and for $\nu_0$-a.e. $x$, by the Borel-Cantelli Lemma, $x_{n}^{1/2} \geqslant b_{n}$ happens only finitely many times. In particular, for fixed $x$ and $n$ sufficiently large,
\[ {\nu_0( A)}/{\sqrt{\diam (A \cap X_\alpha)}} \ll   n^{(1+\delta)/2\gamma_{n}} \cdot  \alpha_{n}^{1/2} \prod_{k=0}^{n-1} \gamma_k \cdot \xi_k(x) =: \Theta_n(x). \] 
It follows from \eqref{eq:general-LIL} that $\lim_n (\log \prod_{k=0}^{n-1} \xi_k(x))/n = -1$ a.s. Hence, $\lim  \Theta_n = 0 $ a.s. provided that 
\[ 0 > \limsup_{n \to \infty} \frac{1}{n} \log \Theta_n =  - 1 +  \limsup_{n \to \infty} \frac{1}{n}\left( \sum_{k=0}^{n-1} \log \gamma_k + \frac{1}{2\gamma_n} \log \frac{n^{1+ \delta}}{ \gamma_n}   \right). \]
However, it easily follows from the relation between $(\alpha_n)$ and $(\gamma_n)$ given in Remark \ref{remark:alpha vs gamma} that the above is equivalent to 
\[ \limsup_{n \to \infty} \frac{1}{n}\left(  \frac{\log \alpha_n}{2} \left( \frac{(1+ \delta)\log n}{\log \log \alpha_n} +1  \right)   - \sum_{k=0}^{n-1} \log \log \alpha_k  \right) <1. \]
Now assume that the condition holds. 
We have therefore that for $\nu_0$-a.e. $x$, any sequence of cylinders $A_n$ of the form $\cup_{u=s}^{t}\llbracket wu\rrbracket$ with $w=(x_0,\ldots, x_{n-1})$ and $\alpha_n\leqslant s\leqslant x_n\leqslant t\leqslant\infty$, $$\lim_{n\to\infty}\frac{\nu_0(A_n)}{\sqrt{\diam(A_n \cap X_\alpha)}}=0.$$ Fix a ball $B_r(x)$ of radius $r$ around $x$. Choose the minimal $n$ such that $\llbracket \tilde w\rrbracket\subset B_r(x)$ for $\tilde w=(x_0,\ldots,x_{n-1})$ and choose the maximal union of cylinders $A_n$ contained in $B_r(x)$ of the form $\cup_{u=s}^t\llbracket w u\rrbracket$ with $w=(x_0,\ldots, x_{n-2})$ and $\alpha_{n-1}\leqslant s\leqslant x_{n-1}\leqslant t\leqslant\infty$. 
Let $\check w=(x_0,\ldots,x_{n-3})$ and $v=x_{n-2}\pm1$. Note that for some distortion constant $C$, $\textrm{dist}(\llbracket \check w x_{n-2}\rrbracket \cap X_\alpha, \llbracket \check w v\rrbracket \cap X_\alpha)\geqslant \frac 1C (1-\frac{1}{\alpha_{n-1}})\diam \llbracket  w\rrbracket $. So $B_r(x)\cap X_\alpha=A_n$ provided that $r/\diam\llbracket w\rrbracket<\frac1{4C}$. In this case, $$\frac{\nu_0(B_r(x))}{r^{1/2}}=\frac{\nu_0(A_n)}{r^{1/2}}\leqslant \sqrt 2\frac{\nu_0(A_n)}{\sqrt{\diam(A_n)}}.$$
Suppose $r/\diam\llbracket  w\rrbracket\geqslant 1/(4C)$. The choice of $n$ implies $\diam\llbracket  w\rrbracket>r$, hence for large $n$, $\diam\llbracket \check w v\rrbracket>r$  and $B_r(x)\subset \llbracket w\rrbracket\cup \llbracket\check w v\rrbracket$. Meanwhile, for $y\in[\check wv]$, \[\nu_0(\llbracket w\rrbracket)/\nu_0(\llbracket \check wv\rrbracket)\asymp \prod_{k=0}^{n-2}\varphi_k(S^k(x))/\varphi_k(S^k(y))<\infty.\] We deduce that
$$\frac{\nu_0(B_r(x))}{r^{1/2}}\ll \frac{\nu_0(B_r(x))}{\sqrt{\diam\llbracket  w\rrbracket}}\ll \frac{\nu_0(\llbracket w\rrbracket)}{\sqrt{\diam(\llbracket w\rrbracket\cap X_\alpha)}}.$$
Then the above implies that
\[ \lim_{r \to 0} \frac{\nu_0(B_r(x))}{r^{1/2}} =0 \]
for $\nu_0$-a.e. $x$. Now assume that by Egorov's theorem $\Omega \subset X_\alpha$ is chosen such that the above limit is uniform. Therefore (for instance by \cite[Theorem 5.7]{Mattila:1995}) the Hausdorff dimension of $\Omega$ is at least $1/2$, and so is $X_\alpha$. It also implies that $\nu_0 \ll \mu_{1/2}$.
  \end{proof}

\section{Proof of Theorem \ref{theo:easy_bounds}} \label{sec:proof}
For the proof of Theorem \ref{theo:easy_bounds}, it suffices to show that the bounds on the growth of $(\alpha_n)$ imply the conditions 
of Proposition \ref{prop:HD-dimension}. That is, it remains to check that for some $\delta>0$
\begin{equation} \label{eq:condition-HDdimension}
\limsup_{n \to \infty} \frac{1}{n}\left(  \frac{\log \alpha_n}{2} \left( \frac{(1+ \delta)\log n}{\log \log \alpha_n} +1  \right)   - \sum_{k=0}^{n-1} \log \log \alpha_k  \right) <1. 
\end{equation}
If $n/C \leqslant \alpha_n \leqslant C \lambda^{n}$ for some $\lambda >1$ and $C \geqslant 1$, then  
\begin{align*}
\frac{\log \alpha_n}{2n} \left( \frac{(1+ \delta)\log n}{\log \log \alpha_n} +1  \right)  
&= 
\frac{\log \alpha_n}{2n\log \log \alpha_n}(1+ \delta)\log n +   \frac{\log \alpha_n}{2n} \\
& \leqslant \frac{\log C \lambda^{n}}{2n\log \log C \lambda^{n}}(1+ \delta)\log n +   \frac{\log C \lambda^{n}}{2n}\\
&\xrightarrow{n \to \infty} (1+\delta/2)\log \lambda. 
\end{align*}
As  $\log \log (n/C)  \sim \log \log n$ and $\int \log \log x dx = x \log\log x - \int_e^x (\log t)^{-1} dt$,  the lower bound implies that
\begin{align*}
\frac{1}{n} \sum_{k=0}^{n-1} \log \log \alpha_k 
\gg  \log\log n - \frac{1}{n} \int_e^n dt
 \xrightarrow{n \to \infty} \infty.
\end{align*} 
Hence, \eqref{eq:condition-HDdimension} is satisfied whenever $n \ll  \alpha_n \ll \lambda^{n}$.
It remains to analyse the case $\lambda^{n}\ll  \alpha_n \ll n^{(1-\epsilon)n}$ for some  $\lambda >1$ and  $\epsilon >0$. These bounds imply that 
\begin{align*}
\frac{1}{n} \sum_{k=0}^{n-1} \log \log \alpha_k &
\gg  \frac{1}{n} \sum_{k=0}^{n-1} \log n \sim \log n, \\
\frac{\log \alpha_n}{2n} \left( \frac{(1+ \delta)\log n}{\log \log \alpha_n} +1  \right) 
&\ll  
\frac{(1-\epsilon)(1+ \delta)(\log n)^2}{ 2(\log n + \log \log n + \log(1-\epsilon))} + 
\frac{(1-\epsilon)\log n}{2}\\
& \sim \frac{(1-\epsilon)(2 + \delta)}{2} \log n
\end{align*}
In particular, \eqref{eq:condition-HDdimension} is satisfied for $\delta :=\epsilon$. \qed

\section*{Acknowledgements} The first author acknowledges support by PQ 312632/2018-5 and Universal 426814/2016-9 of CNPq. The second was supported by the PNPD program of CAPES and then by Fapesp grant \#2018/15088-4.

\end{document}